\newtheorem{thm}{Theorem}[subsection]
\newtheorem{cor}[thm]{Corollary}
\newtheorem{prop}[thm]{Proposition}
\theoremstyle{definition}
\newtheorem{rem}[thm]{Remark}
\newtheorem{exer}[thm]{Exercise}
\numberwithin{equation}{subsection}
\numberwithin{thm}{section}
\newcommand{\Hom}{\text{\rm Hom}}
\newcommand{\ext}{\operatorname{ext}}
\newcommand{\Ext}{\operatorname{Ext}}
\newcommand{\N}{{\mathbb N}}
\newcommand{\Q}{{\mathbb Q}}
\newcommand{\Z}{\mathbb Z}
\newcommand{\inv}{^{-1}}
\newcommand{\R}{\mathbb R}
\newcommand{\iso}{\xrightarrow{\sim}}
\newcommand{\C}{\mathbb C}
\newcommand{\fg}{{\mathfrak{g}}}
\newcommand{\tothe}{T^{\mu}_{\lambda}}
\newcommand{\fromthe}{T_{\mu}^{\lambda}}
\newcommand{\hd}{\operatorname{head}}
\newcommand{\rad}{\operatorname{rad}}
\newcommand{\stab}{\operatorname{Stab}}
\newcommand{\delr}{\Delta^{\mathrm{red}}}
\newcommand{\drr}{\Delta^\mathrm{red}_r}
\newcommand{\delrt}{\Delta^\mathrm{red}_2}
\newcommand{\nar}{\nabla_{\mathrm{red}}}
\newcommand{\nrr}{\nabla_\mathrm{red}^r}
\newcommand{\prdel}{\Delta^{p^r}}
\newcommand{\prnab}{\nabla_{p^r}}
\newcommand{\surj}{\twoheadrightarrow}
\newcommand{\inj}{\hookrightarrow}
\newcommand{\sco}{\mathscr O}
\newcommand{\pr}{\operatorname{pr}}
\newcommand{\gmd}{G\operatorname{-mod}}
\newcommand{\umd}{U_\zeta\operatorname{-mod}}
\newcommand{\w}{\widetilde}
\newcommand{\ch}{\operatorname{ch}}
\begin{document}

\title {On modules arising from quantum groups at $p^r$-th roots of unity}
\author{Hankyung Ko}
\address{Department of Mathematics \\
University of Virginia\\
Charlottesville, VA 22903} \email{hk2kn@virginia.edu}

\begin{abstract}
This paper studies the ``reduction mod $p$'' method, which constructs large classes of representations for a semisimple algebraic group $G$ 
from representations for the corresponding Lusztig quantum group $U_\zeta$ at a $p^r$-th root of unity. 
The $G$-modules arising in this way include the Weyl modules, the induced modules, and various reduced versions of these modules.
We present a relation between
$\Ext^n_G(V,W)$ and $\Ext^n_{U_\zeta}(V',W')$, when $V,W$ are obtained from $V',W'$ by reduction mod $p$. 
Since the dimensions of $\Ext^n$-spaces for $U_\zeta$-modules are known in many cases, our result guarantees the existence of many new extension classes and homomorphisms between certain rational $G$-modules. One application is a new proof of James Franklin's result on certain homomorphisms between two Weyl modules.
We also provide some examples which show that the $p$-th root of unity case and a general $p^r$-th root of unity case are essentially different.
\end{abstract}

\maketitle


\section{introduction}
Let $G$ be a semisimple simply connected algebraic group, defined and split over a field $k$ of characteristic $p>2$. 
In studying the rational $G$-representations, the knowledge on the representations for the corresponding quantum group at a root of unity is very useful.
That is, consider $U_\zeta$, the Lusztig quantum group of the same type as $G$,
over a field $K$ of characteristic zero, specialized at a root of unity $\zeta\in K$, and ``compare'' $G$-mod, the category of rational $G$-representations, and $U_\zeta$-mod, the category of finite dimensional integrable $U_\zeta$-modules of type 1. 
The two cases are closely related if the root of unity $\zeta$ has the order equals to $p$.
They have the same weight lattice $X$;
they are both highest weight categories with the poset $X^+$ of dominant weights (note, however, that $\gmd$ does not have projectives and has enough (infinite dimensional) injectives, while $\umd$ has both enough injectives and projectives);
the standard modules in two cases, indexed by the same highest weight $\gamma\in X^+$, have the same character $\chi(\gamma)$ given by Weyl's formula;
we have, in both cases, the linkage principle given by the affine Weyl group action;  
the translation functors between two orbits are defined in the same way and share similar properties in the two cases;
the standard modules in both cases have certain filtrations with a sum formula (Jantzen filtration);
the Frobenius kernel $G_1$ and the small quantum group $\mathbf u$ provide infinitesimal versions ($G_1T$-mod and $\mathbf uU_\zeta^0$-mod) of $\gmd$ and $\umd$.  
General theories including most of these can be found in \cite[II]{J} for the algebraic group representations and in \cite{APW} or \cite[II.H]{J} for the quantum group representations. 
In case $p$ is large enough, the infinitesimal versions of $G$-mod and $U_\zeta$-mod are even described using a common combinatorial category constructed in \cite{AJS}, which implies that the multiplicities of an irreducible module in a standard module (hence the irreducible characters) in the two cases are the same if the weights involved are small. 

For general prime integer $p$, we have a better understanding on the quantum case than the algebraic group case. 
In particular, the Kazhdan-Lusztig polonomials compute the irreducible characters (given by the Lusztig character formula) and the dimensions of higher $\Ext$-spaces between two irreducibles. (See \cite{cps1} or \cite[II.C]{J} for the regular case and \cite{mine} for the singular case.)
In the algebraic group case, however, we need two significant restrictions in interpreting the Kazhdan-Lusztig polynomials.
The weights need be small, and the prime $p$ needs be large. 
A possible way to get rid of the restrictions is to consider the ``reduction mod $p$'' from the quantum case to the algebraic group case (see \S\ref{sssredmop}).
Parshall-Scott (\cite[Conjecture II]{PS14}) conjectured (under the Kazhdan-Lusztig equivalence) that the $\Ext$ computation in $\umd$ is valid in $\gmd$ if we replace the irreducible modules by appropriate reduction mod $p$ modules.
More precisely, the conjecture says
\begin{align}\label{con2}
\begin{split}
 \dim_K\Ext^n_{U_\zeta}(\Delta_\zeta(\gamma),L_\zeta(\gamma'))=&\dim_k\Ext^n_G(\Delta(\gamma),\nar(\gamma')),\\
 \dim_K\Ext^n_{U_\zeta}(L_\zeta(\gamma),\nabla_\zeta(\gamma'))=&\dim_k\Ext^n_G(\delr(\gamma),\nabla(\gamma')),\\
 \dim_K\Ext^n_{U_\zeta}(L_\zeta(\gamma),L_\zeta(\gamma'))  =&\dim_k\Ext^n_G(\delr(\gamma),\nar(\gamma')),
\end{split}
\end{align}
where $\gamma\in X^+$. Here $\delr(\gamma)$ (resp., $\nar(\gamma)$) is an abbreviation for $\delr_1(\gamma)$ (resp., $\nar^1(\gamma)$) which is defined in \S\ref{reducingmodules}.
The formula is shown in \cite{CPS7} to be valid for $p\gg 0$ in regular blocks.
Generally, one direction (``$\leq$'') of it follows from Proposition \ref{correducing}.

Let us now consider a $p^r$-th root of unity $\zeta$ instead of a $p$-th root of unity.
Everything makes sense since we can still reduce mod $p$ from the quantum side to $\gmd$. 
The use of $p^r$-th roots of unity, in fact, is not new and was studied by Lin in \cite{Lin}.
We explain the reduction mod $p$ procedure in \S\ref{ssq}.
We show that we still have the inequality in \eqref{con2} with $r\geq 1$, (Proposition \ref{correducing}) while we don't have the equality for $r>1$ (\S\ref{a1exs}). 
As a consequence of Proposition \ref{correducing}, we obtain Franklin (\cite{Franmaps}) type of results on the maps between standard modules (corollaries \ref{franklin}, \ref{AKmaps}, \ref{multcor}).
We end the paper by checking that some other nice facts for the $r=1$ case becomes false for the $r>1$ cases (\S\ref{a1exs}).

\section*{Acknowledgement}
I am indebted to Brian Parshall and Leonard Scott for numerous discussions on the subject and helpful comments.

\section{Setting and notation}
\subsection{The algebraic group case}\label{ssalg}
 Let $G$ be a semisimple simply connected algebraic group over a field $k$ of characteristic $p>2$. We also assume that $G$ is defined and split over $\mathbb F_p\subset k$.
Fix a maximal (split) torus $T\subset G$ and a Borel subgroup $B\supset T$ in $G$. 
Let $R$ be the root system of $G$, $\Sigma$ be the set of simple roots, $R^+$ the set of positive roots, $X=X(T)$ be the set of weights, $X^+=X^+(T)$ be the set of dominant weights.
A general theory for the algebraic group $G$ and its representations is well explained in \cite[II]{J}.
 
Our interest is on $G$-mod, the category of rational $G$-modules. (We remind the reader that $\gmd$ is isomorphic to the category of of left comodules for the coordinate (Hopf) algebra $k[G]$ of $G$.)
The category $G$-mod is abelian, has enough injectives, and is a highest weight category in the sense of Cline-Parshall-Scott \cite{CPShwc} with the infinite poset $(X^+,\uparrow)$.
(See \S\ref{sslink} for the ordering $\uparrow$.)
For each $\gamma\in X^+$, we denote the standard object of highest weight $\gamma$ by $\Delta(\gamma)$ (which is the Weyl module, often denoted by $V(\gamma)$), the costandard object by $\nabla(\gamma)$ (which is the induced module and is denoted by $H^0(\gamma)$ in \cite{J}), and the simple object by $L(\gamma)$. 


\subsection{The quantum case and the integral case}\label{ssq}
Let $R$ be a semisimple simply connected (finite classical) root system as in \S\ref{ssalg}.
Let $(-,-)$ be a scalar product on $X(T)\otimes_\Z \R$ such that the smaller one among the integers $(\alpha,\alpha)$ for $\alpha\in R$ is $2$.
The quantum enveloping algebra $U_v=U_v(R)$ is 
defined over the function field $\Q(v)$ by generators 
$E_\alpha, F_\alpha, K_\alpha, K\inv_\alpha$ ($\alpha \in \Sigma$) 
and relations
	\begin{align*}
	K_\alpha K_\alpha =1 =K\inv_\alpha K_\alpha, &\ \ K_\alpha K_\beta=K_\beta K_\alpha,\\
	K_\alpha E_\beta K\inv_\alpha &=v^{(\alpha,\beta)}E_\beta,\\
	K_\alpha F_\beta K\inv_\alpha &=v^{-(\alpha,\beta)}F_\beta,\\
	E_\alpha F_\beta - F_\beta E_\alpha &=\delta_{\alpha\beta}\frac{K_\alpha-K\inv_\alpha}{v^{d_\alpha}-v^{-d_\alpha}}\\
	\displaystyle \sum_{s=0}^{1-a_{\alpha\beta}} (-1)^s {1-a_{\alpha\beta}\brack s}_\alpha &E^{1-a_{\alpha\beta}-s}_\alpha E_\beta E^s_\alpha =0,\\
	\displaystyle \sum_{s=0}^{1-a_{\alpha\beta}} (-1)^s {1-a_{\alpha\beta}\brack s}_\alpha &F^{1-a_{\alpha\beta}-s}_\alpha F_\beta F^s_\alpha =0,
	\end{align*}
where $d_\alpha=\frac{(\alpha,\alpha)}{2}$ (which can be $1,2$, or $3$) and $a_{\alpha\beta}=\langle \beta,\alpha^\vee\rangle=\frac{(\beta,\alpha)}{d_\alpha}$ for $\alpha, \beta \in \Sigma$.
Here ${1-a_{\alpha\beta}\brack s}_\alpha$ are the Gaussian binomial coefficients (see \cite[p.516]{J}).

 Letting $U^0_v$ denote the subalgebra of $U_v$ generated by the $K_\alpha^{\pm 1}$, $U_v^+$ the subalgebra generated by the $E_\alpha$, $U_v^-$ the subalgebra generated by the $F_\alpha$,
we have the triangular decomposition 
\begin{equation}\label{triandec}
U_v^-\otimes_{\Q(v)} U_v^0\otimes_{\Q(v)} U_v^+\xrightarrow[\mathrm{mult}]{\sim} U_v
\end{equation} induced by the muliplication in $U_v$.

 The algebra $U_v$ has an integral form $U_\mathscr{A}$ over $\mathscr{A}:=\Z [v,v\inv]$. It is defined as the $\mathscr{A}$-subalgebra of $U_v$ generated by the elements $K_\alpha^{\pm 1}$ and the $v$-divided powers $$E_\alpha^{(n)}=\frac{E_\alpha^n}{[n]^!_\alpha},\ \ \ \ \ \  F_\alpha^{(n)}=\frac{F_\alpha^n}{[n]^!_\alpha}$$ for $n>0$.
It is shown in \cite{Lusfdhf,Lusquanat1} that $U_\mathscr{A}$ also has a presentation by generators and relations compatible with the presentation for $U_v$. In particular, the triangular decomposition \eqref{triandec} restricts to $U_\mathscr{A}$, giving $$U_\mathscr{A}^-\otimes_\mathscr{A} U_\mathscr{A}^0\otimes_\mathscr{A} U_\mathscr{A}^+\iso U_\mathscr{A}.$$
Now, given a $\mathscr{A}$-algebra $\mathscr{B}$, one can take the tensor product to define the quantum group \[U_\mathscr{B}:=U_\mathscr{A}\otimes_\mathscr{A}\mathscr{B}.\]

Let $r$ be a positive integer and $\zeta\in \C$ be a primitive $p^r$-th root of unity.
Then specializing $v$ to $\zeta$ will give quantum algebras $U_\mathscr{A}\otimes\Z[\zeta]$ and $U_\mathscr{A}\otimes\Q(\zeta)$ at the root of unity $\zeta$.
For our purpose of relating the quantum group representations to $G$-mod, a modification on the base rings is necessary:
Instead of considering the above specializations,
we take the integral quantum algebras over a discrete valuation ring $\sco$ with the maximal ideal $(\pi)$, so that the residue field $\sco/(\pi)$ is isomorphic to a field $k$ of characteristic $p$ and the quotient field of $\sco$ is a field $K$ of characteristic zero. 
Such a triple ($K,\sco,k)$ is called a $p$-modular system.
For example, we take the localization $\sco:=\Z[\zeta]_{(\zeta-1)}$ in $\C$.
(If we don't want to start with taking some $\zeta\in\C$, we can set this up as follows. 
Consider the localization $\mathscr A_{(v-1,p)}$ and let $\sco=\mathscr A_{(v-1,p)}/(1+v+\cdots +v^{p^r-1})$. 
Then the image of $v$ in $\sco$ is a primitive $p^r$-th root of unity, which we rename to $\zeta$. 
In this case, the residue field $k$ of $\sco$ is the prime field $\mathbb F_p$, and the quotient field $K$ of $\sco$ is contained in $\C$.

We denote by $U_\zeta$ the quantum group $U_v\otimes_{\Q(v)} K=(U_v\otimes_{v\mapsto\zeta}\Q(\zeta) )\otimes_{\Q(\zeta)} K$ thus obtained. 
The integral form $U_\sco$ will be denoted by $\w U_\zeta$.

 \subsubsection{The quantum case}\label{sssquantum}
 
The ``quantum case'' in this paper refers to $U_\zeta$-mod, the category of integrable finite dimensional $U_\zeta$-modules of type 1. 
Recall that a $U_\zeta$-module $M$ is called integrable if i) it is a weight module; ii) for each vector $v\in M$ $E_\alpha^{(n)}v=F_\alpha^{(n)}v=0$ for $n\gg0$. We say $M$ has type $1$ if the central elements $K_\alpha^{p^r}$ act on $M$ as an identity.
The weight lattice $X(U^0_\zeta)$ for $U_\zeta$-mod is identified with the weight lattice $X(T)$ of $G$, whence we write $X$ for this common weight lattice.
Then $\umd$ is a highest weight category with the poset $X^+$ of dominant weights.
We denote its standard objects by $\Delta_\zeta(\gamma)$, costandard objects by $\nabla_\zeta(\gamma)$, and simple objects by $L_\zeta(\gamma)$ for $\gamma\in X^+$.
Another important point is that $U_v$ and hence $U_\zeta$ are Hopf algebras.
Thus, $U_\zeta$-mod has a tensor product.
A general theory for $U_\zeta$-mod 
is developed in \cite{APW}. 
See also \cite{AndLink}.

\subsubsection{The integral case
}\label{ssinteg}

The integral version of the category $U_\zeta$-mod is $\w U_\zeta$-mod, the category of integrable finite (i.e., finitely generated over $\sco$) $\w U_\zeta$-modules of type 1. 
As in the algebraic group case and the quantum case, the highest weights of highest weight modules are indexed by the dominant weights.
Also as in the two cases, we have a tensor product of $\w U_\zeta$-modules using the Hopf algebra structure of $\w U_\zeta$.


\subsubsection{Reduction mod $p$}\label{sssredmop}

We finally explain how the integral case provides a direct connection between the representation theory of $G$ and that of $U_\zeta$.
Recalling $$\w U_\zeta/(\{K_\alpha-1\}_{\alpha\in\Sigma})\otimes_\sco k\cong \text{Dist}(G)$$ from \cite{Lusquanat1},
we see that a module $\w M$ in $\w U_\zeta$-mod ``reduces mod $p$'' to a module $\w M\otimes_\sco k$ in $G$-mod.
(Note here that $K_\alpha$ acts as $1$ on the type $1$ module $\w M_k$.)

\subsection{The linkage principle}\label{sslink}\label{ssdecomp}
We have identified the weight lattices for the algebraic group case, the quantum case, and the integral case.
This subsection defines the affine Weyl group action and linkage classes on the common weight lattice $X$.
Consider the $\R$-space $X\otimes_\Z\R$.
For $\alpha\in R$ and $m\in \Z$, denote by $s_{\alpha,m}$ the reflection with respect to the hyperplane in $X\otimes_\Z\R$ defined by the equation $\langle \lambda,\alpha^\vee\rangle=m.$ That is,
$$s_{\alpha,m}(\gamma)=\gamma-(\langle \gamma,\alpha^\vee\rangle-m)\alpha$$ for $\gamma\in X\otimes_\Z\R$. 
Let $W$ be the finite Weyl group of $R$. 
It is the reflection group generated by the simple reflections $s_\alpha=s_{\alpha,0}$:
\[W=\langle s_\alpha\ |\ \alpha\in \Sigma\rangle\]
For any $l\in\Z$, we define the affine Weyl group $W_l$ to be 
\[W_l=\langle s_{\alpha,ml}\ |\ \alpha\in R,\ m\in \Z\rangle\cong l\Z R\rtimes W.\]
\begin{rem}
The affine Weyl group in the quantum case (defined in \cite{AndLink}) is, in fact, slightly different. 
Let $l_\alpha:=\frac{l}{\gcd(l,d_\alpha)}$ for each $\alpha\in R$, where $d_\alpha=\frac{(\alpha,\alpha)}{2}$.
Then the affine Weyl group for the qunatum case is defined as
\[W_{D,l}=\langle s_{\alpha,ml_\alpha }\ |\ \alpha\in R,\ m\in \Z\rangle.\]
Since we assume that $l=p^r$ is odd, we have $s_{\alpha,ml_\alpha }=s_{\alpha,ml}$ except the case $p=3$ and $\alpha$ is a long root in type $G_2$.
We denote this affine Weyl group by $W_l$ (abusing notation in the $G_2$ situation above) in the paper.
See \cite[\S2.4.3]{hodge2016remarks} for a remark on this regarding the dual root system.
\end{rem}

Let $\rho$ be the sum of all fundamental weights, or equivalently, $\rho$ is the half sum of all positive roots. 
We almost always shift the action of $W_l$ on $X\otimes_Z\R$ by $\rho$, that is, $$w.\gamma=w(\gamma+\rho)-\rho$$ for $w\in W_l, \gamma\in X\otimes_Z\R$.

The standard (antidominant) $l$-alcove is by definition
\[^lC^-:=\{\gamma\in X\otimes_\Z\R\ |\ -l<\langle\gamma+\rho,\alpha^\vee\rangle <0 \textrm{ for all }\alpha\in R^+\}.\]
We call each $w.^lC^-$ an ($l$-)alcove. 
Another $l$-alcove we want to give a name is the bottom dominant alcove
\[^lC^+:=\{\gamma\in X\otimes_\Z\R\ |\ 0<\langle\gamma+\rho,\alpha^\vee\rangle <l \textrm{ for all }\alpha\in R^+\}.\]
 We call each set of the form 
\begin{align*}
F=\{\gamma\in X\otimes_\Z\R\ |\ l(n_\alpha-1) <&\langle\gamma+\rho,\alpha^\vee\rangle <ln_\alpha \textrm{ for all }\alpha\in R^+_0(F),\\
&\langle\gamma+\rho,\alpha^\vee\rangle =ln_\alpha\textrm{ for all }\alpha\in R^+_1(F)\}
\end{align*}
an ($l$-)facet, where $R^+=R^+_0(F)\sqcup R^+_1(F)$.
Then the closure $\overline{w.^lC^-}=w.\overline{^lC^-}$ (for any $w\in W_l$) is a union of facets and is a fundamental domain for the $W_l$-action.
Given $\gamma\in X\otimes_\Z \R$, there is a unique facet $F$ such that $\gamma$ is contained in the upper closure $\widehat{F}$, where we define the upper closure of $F$ as
\begin{align*}
\widehat{F}=\{\gamma\in X\otimes_\Z\R\ |\ l(n_\alpha-1) <&\langle\gamma+\rho,\alpha^\vee\rangle \leq ln_\alpha \textrm{ for all }\alpha\in R^+_0(F),\\
&\langle\gamma+\rho,\alpha^\vee\rangle =ln_\alpha\textrm{ for all }\alpha\in R^+_1(F)\}.
\end{align*}

We write a weight $\gamma$ (i.e., an element of $X$) as $w.\lambda$ for some $w\in W_l$ and a unique $\lambda$ in $\overline{^lC^-_\Z}:=\overline{^lC^-}\cap X$. (A more correct notation for this will be $\overline{^lC^-}_\Z$, but $\overline{^lC^-_\Z}$ looks better.)
We call a weight $\gamma=w.\lambda$ regular if $\lambda \in {^lC^-}$. 
We call $\gamma\in X$ singular if it is not regular. 
The choice of $w\in W_l$ is unique if and only if $\lambda$ is regular. If $\lambda$ is regular, this identifies $X^+\cap W_l.\lambda$ with the subset $$W_l^+:=\{w\in W_l\ |\ w.\lambda \in X^+\}$$ of $W_l$. For a general weight $\lambda$, we have preferred representatives. 
Recall that $W_l$ is generated by the subset $S_l$, which we choose to correspond to the simple reflections through the walls of $^lC^-$. 
Furthermore, ($W_l,S_l$) is a Coxeter system which has a natural ordering and a length function $l:W_l\to \Z$. 
Let $I:=\{s\in S_l\ |\ s.\lambda=\lambda\}$, $W_I=(W_l)_I:=\{ w\in W_l\ |\ w.\lambda=\lambda\}$, and let $W^I=(W_l)^I$ be the set of shortest coset representatives in $W_l/W_I$. 
Then for $w\in W_l^+$, we have $w\in W^I$ if and only if $w.\lambda\in\widehat{w.^lC^-}$. 
Now define $$W_l^+(\lambda):=W^{I}\cap W_l^+.$$ 
We identify $W_l^+(\lambda)$ with the set of dominant weights in the orbit of $\lambda$. 
The uparrow ordering of $X^+$ is defined to agree with the Coxeter ordering of $W_l$ (restricted to $W_l^+(\lambda)$) when restricted to $W_l^+(\lambda).\lambda\subset X^+$. 
(There is no order relation between two weights from two different $W_l$ orbits.)
See \cite[II.6, 8.22]{J} for more discussions on this.


Now we consider the categories $\gmd$, and $\umd$. 
Take $l=p$ when we are in the algebraic group case. Take $l=p^r$ when we talk about the other cases.
By the linkage principle \cite[II.6]{J}, \cite[\S8]{APW}, the $G$-modules and $U_\zeta$-modules decomposes into the submodules (which are summands) whose composition factors have highest weights in the same $W_l$-orbits.
Using our notation, we can write this as the decomposition
$$\gmd=\bigoplus_{\mu\in\overline{^pC^-_\Z}}(\gmd)[W_l^+(\mu).\mu]$$
and
$$\umd=\bigoplus_{\mu\in\overline{^lC^-_\Z}}(\umd)[W_l^+(\mu).\mu].$$
(Given a highest weight category $\mathcal C$ with a poset $\Lambda$ and an ideal $\Gamma \unlhd\Lambda$, we set $\mathcal C[\Gamma]$ to be the Serre subcategory of $\mathcal C$ generated by the irreducibles in $\{L(\gamma)\}_{\gamma\in\Gamma}$. See \cite{CPShwc} for more details.)

\subsection{Weyl's character formula}
Consider the group algebra $\Z[X]$ of $X$. It has a basis $\{e(\gamma)\}_{\gamma\in X}$ with the multiplication $e(\gamma)e(\gamma')=e(\gamma+\gamma')$.
For $\gamma\in X$, the Weyl character 
\begin{equation}\label{weyl}
\chi(\gamma):=\frac{\sum_{w\in W}\det(w)e(w\gamma+w\rho)}{\sum_{w\in W}\det(w)e(w\rho)}=\frac{\sum_{w\in W}\det(w)e(w.\gamma)}{\sum_{w\in W}\det(w)e(w.0)}
\end{equation} is defined as an element in the fraction field of $\Z[X]$.
This element, while written as a fraction, actually belongs to $\Z[X]$.
We have for each $w\in W$ and $\gamma\in X$,
\begin{equation}\label{weylchreleq}
\chi(w\gamma)=\det(w)\chi(\gamma).
\end{equation}
If $\gamma\in X^+$, the formula \eqref{weyl} gives the characters of standard and costandard modules in many module categories, including $\gmd$ and $\umd$. That is, we have
\begin{equation}\label{weylcheq}
\ch\Delta(\gamma)=\ch\nabla(\gamma)=\ch\Delta_\zeta(\gamma)=\ch\nabla_\zeta(\gamma)=\chi(\gamma).
\end{equation}
 See, for example, \cite[II.5.10]{J}.

\section{Reducing modules modulo $p$}\label{reducingmodules}

We start with relating the standard and costandard modules in $\gmd$ and $\umd$. 
Recall that an $\sco$-submodule $\w M$ of a $U_\zeta$-module $M$ is called an admissible lattice if $\w M$ is $\sco$-free, $\w U_\zeta$-invariant and $K$-generates $M$ (i.e., $\w M\otimes_\sco K\cong M$).
In this case, the admissible lattice $\w M$ has a decomposition into weight $\sco$-free modules such that $\w M_\gamma\otimes_\sco K\cong M_\gamma$ for each $\gamma\in X$.
Choose a minimal admissible lattice $\w\Delta_\zeta(\gamma)$ in $\Delta_\zeta(\gamma)$. 
This is done simply by picking a highest weight vector $v$ in $\Delta_\zeta(\gamma)$ and letting $\w\Delta_\zeta(\gamma):=\w U_\zeta v$. For the costandard modules, we dualize this to take an admissible lattice $\w\nabla_\zeta(\gamma)$ in $\nabla_\zeta(\gamma)$ rather than dealing with the problem of what is a maximal lattice. 
Then we have
$$\w\Delta_\zeta(\gamma)_K\cong\Delta_\zeta(\gamma),\ \ \ \ \ \ \ \w\nabla_\zeta(\gamma)_K\cong\nabla_\zeta(\gamma).$$
and
$$\w\Delta_\zeta(\gamma)_k\cong\Delta(\gamma),\ \ \ \ \ \ \ \w\nabla_\zeta(\gamma)_k\cong\nabla(\gamma).$$
(Write $M_K:=M\otimes_\sco K$, $M_k:=M\otimes_\sco k$ for an $\sco$-module $M$.) 
So far we don't get any new $G$-modules. The irreducible $U_\zeta$-modules will give rise to the new modules of our interest. Let's do that.

Take a minimal admissible lattice $\w L_\zeta^{\mathrm{min}}(\gamma)$ in $L_\zeta(\gamma)$ and its dual $\w L_\zeta^{\mathrm{max}}(\gamma)$ in $L_\zeta(\gamma)$ (which is the dual of $L_\zeta(\gamma)$). 
Then we define $$\drr(\gamma):=(\w L_\zeta^{\mathrm{min}}(\gamma))_k,\ \ \ \ \ \ \ \ \nrr(\gamma):=(\w L_\zeta^{\mathrm{max}}(\gamma))_k.$$ 
These modules are not irreducible in general. In fact, they can be pretty big, as we see in the second sentence of the following observation.

\begin{prop}\label{trivsurj}
Let $\gamma\in X^+$. There is a surjective map $\Delta(\gamma)\to \drr(\gamma)$ for all $r\in \N$. It is an isomorphism if $\Delta_\zeta(\gamma)\cong L_\zeta(\gamma)$, in particular, if $\gamma\in \overline{^{p^r}\!C^+_\Z}$.
\end{prop}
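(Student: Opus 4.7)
The plan is to derive everything from the natural quotient map $q\colon\Delta_\zeta(\gamma)\surj L_\zeta(\gamma)$ of $U_\zeta$-modules, restrict it to the chosen minimal admissible lattices, and then reduce mod $p$. By construction, $\w\Delta_\zeta(\gamma)=\w U_\zeta\cdot v_\Delta$ and $\w L_\zeta^{\mathrm{min}}(\gamma)=\w U_\zeta\cdot v_L$ for highest weight vectors $v_\Delta$ and $v_L$ of weight $\gamma$. Since $L_\zeta(\gamma)_\gamma$ is one-dimensional over $K$, $q(v_\Delta)=c\,v_L$ for some $c\in K^\times$; after rescaling $v_L$ (which replaces $\w L_\zeta^{\mathrm{min}}(\gamma)$ by an isomorphic $\w U_\zeta$-module) I may assume $q(v_\Delta)=v_L$. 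Then $q(\w\Delta_\zeta(\gamma))=\w U_\zeta v_L=\w L_\zeta^{\mathrm{min}}(\gamma)$, so the restriction of $q$ is a $\w U_\zeta$-linear surjection of admissible lattices, and applying the right-exact functor $-\otimes_\sco k$ produces the desired surjection $\Delta(\gamma)\surj\drr(\gamma)$.

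For the second assertion, assume $\Delta_\zeta(\gamma)\cong L_\zeta(\gamma)$, so that $q$ itself is an isomorphism. The restriction $q|_{\w\Delta_\zeta(\gamma)}\colon\w\Delta_\zeta(\gamma)\to\w L_\zeta^{\mathrm{min}}(\gamma)$ is surjective by the previous paragraph and injective because $q$ is; it is therefore an isomorphism of $\w U_\zeta$-modules, and it descends to an isomorphism $\Delta(\gamma)\iso\drr(\gamma)$ after tensoring with $k$. It only remains to verify that $\Delta_\zeta(\gamma)\cong L_\zeta(\gamma)$ when $\gamma\in\overline{^{p^r}\!C^+_\Z}$. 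For this I would invoke the strong linkage principle in $\umd$ \cite{AndLink}: any composition factor of $\Delta_\zeta(\gamma)$ is some $L_\zeta(\mu)$ with $\mu\in W_{p^r}.\gamma\cap X^+$ and $\mu\uparrow\gamma$. A weight in the closure of the bottom dominant alcove is $\uparrow$-minimal in its $W_{p^r}$-orbit intersected with $X^+$ (by the description of $\uparrow$ in \S\ref{sslink}), so $\mu=\gamma$ is forced and $\Delta_\zeta(\gamma)$ is irreducible.

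No major obstacle is anticipated; the proposition is essentially a formal consequence of the definitions of the minimal admissible lattices, the right-exactness of base change, and the linkage principle. The only bookkeeping subtlety is that the minimal admissible lattice $\w L_\zeta^{\mathrm{min}}(\gamma)$ is only defined up to $\sco$-rescaling inside $L_\zeta(\gamma)$, but any two such rescalings give isomorphic $\w U_\zeta$-modules, which is all that the statement of the proposition requires.
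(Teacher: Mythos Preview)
Your proposal is correct and follows essentially the same approach as the paper: choose compatible highest weight generators so that the quotient map $\Delta_\zeta(\gamma)\surj L_\zeta(\gamma)$ restricts to a surjection of minimal admissible lattices, then base change to $k$. You supply a bit more detail than the paper on the isomorphism claim and on the ``in particular'' clause (invoking linkage, where the paper simply declares it trivial), but there is no substantive difference.
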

\begin{proof}
We may assume that $\w L_\zeta^{\mathrm{min}}(\gamma)=\w U_\zeta\overline v$ where $\overline v$ is the image of a highest weight vector $v$ in $\Delta_\zeta(\gamma)$ under the surjective map $\Delta_\zeta(\gamma)\surj L_\zeta(\gamma)$. We also take $\w\Delta_\zeta(\gamma)=\w U_\zeta v$. Now the map 
$$\w\Delta_\zeta(\gamma)\surj\w L^{\mathrm{min}}(\gamma)$$ given by $v\mapsto \overline v$ induces the desired surjection 
$$\Delta(\gamma)\surj \drr(\gamma)$$ if we apply the exact functor $-\otimes_\sco k$. The second claim is trivial.
\end{proof}

\begin{cor}
For $\gamma\in X^+$, we have
\begin{align}
&\Delta(\gamma)\surj\drr(\gamma)\surj L(\gamma)\label{delsurjdrr}\\
&L(\gamma)\inj\nrr(\gamma)\inj\nabla(\gamma).
\end{align}

\end{cor}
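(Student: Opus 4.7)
The first surjection $\Delta(\gamma)\surj\drr(\gamma)$ is exactly the statement of Proposition \ref{trivsurj}, so the content of the first line is producing $\drr(\gamma)\surj L(\gamma)$. My plan for this is to argue that $\drr(\gamma)$ is itself a highest weight $G$-module of highest weight $\gamma$. Since $\w L_\zeta^{\mathrm{min}}(\gamma)=\w U_\zeta\overline v$ for the weight vector $\overline v$ of weight $\gamma$ used in the proof of Proposition \ref{trivsurj}, tensoring with $k$ shows that $\drr(\gamma)=\w L_\zeta^{\mathrm{min}}(\gamma)_k$ is generated, as a module for $\operatorname{Dist}(G)$ (equivalently, as a rational $G$-module), by the image of $\overline v\otimes 1$. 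Its weights are the same multiset as those of $\w L_\zeta^{\mathrm{min}}(\gamma)$, which coincide with the weights of $L_\zeta(\gamma)$; these all lie $\leq\gamma$, with $\gamma$ occurring with multiplicity one. Thus $\drr(\gamma)$ is a highest weight module of highest weight $\gamma$ with one-dimensional top weight space, and so admits a unique maximal submodule with simple quotient $L(\gamma)$. This gives the desired surjection.

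For the second line, my plan is to apply the standard contravariant duality $\tau$ on $\gmd$ (the functor that fixes each $L(\mu)$ and exchanges $\Delta(\mu)$ with $\nabla(\mu)$; see \cite[II.2.12]{J}). Applying $\tau$ to \eqref{delsurjdrr} yields
\[
\nabla(\gamma)\cong\Delta(\gamma)^\tau\longleftarrow\drr(\gamma)^\tau\longleftarrow L(\gamma)^\tau\cong L(\gamma),
\]
with both arrows injective. The claim will follow once I identify $\drr(\gamma)^\tau\cong\nrr(\gamma)$. This identification is forced by the definition of $\w L_\zeta^{\mathrm{max}}(\gamma)$ as the lattice dual to $\w L_\zeta^{\mathrm{min}}(\gamma)$ under the contravariant duality in $\w U_\zeta$-mod (which fixes simple modules up to isomorphism), together with the fact that this $\sco$-level duality is compatible with reduction mod $p$ via the isomorphism $\w U_\zeta/(\{K_\alpha-1\})\otimes_\sco k\cong\operatorname{Dist}(G)$ from \S\ref{sssredmop}: in each case duality is given by the $\sco$-linear dual $\operatorname{Hom}_\sco(-,\sco)$ on the underlying weight decomposition, twisted by the antiautomorphism that swaps $E_\alpha^{(n)}$ with $F_\alpha^{(n)}$, and these operations commute with $-\otimes_\sco k$ for $\sco$-finitely generated free modules.

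The step I expect to require the most care is precisely this last compatibility: to write down the involution $\tau$ at the $\w U_\zeta$-level, check that it descends to the familiar contravariant duality on $\gmd$ under $-\otimes_\sco k$, and verify that $\w L_\zeta^{\mathrm{max}}(\gamma)=\w L_\zeta^{\mathrm{min}}(\gamma)^\tau$ implies $\nrr(\gamma)\cong\drr(\gamma)^\tau$. Once this is in place, the rest of the corollary is formal from the first line and Proposition \ref{trivsurj} applied to $\nabla$ (or, equivalently, dualized).
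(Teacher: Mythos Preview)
Your argument is correct, but the paper's proof is considerably more economical and avoids the compatibility check you flag as ``requiring the most care.'' The paper simply observes that, by Proposition~\ref{trivsurj} and its dual, we already have $\Delta(\gamma)\surj\drr(\gamma)$ and $\nrr(\gamma)\inj\nabla(\gamma)$; since $\Delta(\gamma)$ has simple head $L(\gamma)$ and $\nabla(\gamma)$ has simple socle $L(\gamma)$, it remains only to check that $\drr(\gamma)$ and $\nrr(\gamma)$ are nonzero --- which is immediate as they are reductions of nonzero $\sco$-free lattices. Your highest-weight argument for $\drr(\gamma)\surj L(\gamma)$ is of course the same fact unpacked, but for the second line you take a genuine detour: rather than running the dual argument directly on the $\nabla$-side, you dualize the entire first line over $k$ and then have to justify $\drr(\gamma)^\tau\cong\nrr(\gamma)$ via compatibility of the $\sco$-level contravariant duality with $-\otimes_\sco k$. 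That identification is true and worth knowing, but it is not needed here; the paper bypasses it entirely by using only that $\nrr(\gamma)$ is a nonzero submodule of $\nabla(\gamma)$.
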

\begin{proof}
By Proposition \ref{trivsurj} and its dual, it is enough to check that $\drr(\gamma)$ and $\nrr(\gamma)$ are not zero. But they arise from (nonzero) $\sco$-free lattices, hence cannot be zero.
\end{proof}

We now compare the reduction mod $p$ modules with another class of well-known modules.
Recall first the Steinberg tensor product theorems for the two cases:
\begin{equation}\label{stforG'}
L(\gamma_0+p^r\gamma_1)\cong L(\gamma_0)\otimes L(p^r\gamma_1)\cong L(\gamma_0)\otimes L(\gamma_1)^{[r]}
\end{equation}
\begin{equation}\label{stforU'}
L_\zeta(\gamma_0+p^r\gamma_1)\cong L_\zeta(\gamma_0)\otimes L_\zeta(p^r\gamma_1)\cong L_\zeta(\gamma_0)\otimes V(\gamma_1)^{[1]},
\end{equation}
where $\gamma_0\in X_r:=\{\gamma\in X^+\ |\ \langle \gamma
+\rho,\alpha^\vee\rangle <p^r,\ \ \forall \alpha\in \Pi\}$, $\gamma_1\in X^+$. 
We need to explain the terms:
The Frobenius twist $-^{[1]}$ for $G$ is equivalent to the map $f\mapsto f^p$ on the coordinate algebra $k[G]$. See \cite[I.9,II.3]{J}. 
The Frobenius twist $-^{[1]}$ for $U_\zeta$ (see \cite[II.H.6]{J}) is of a different nature because it is a map between $U_\zeta$ and $U(\fg)$, the universal enveloping algebra of the Lie algebra $\fg=\fg_K$.
Since the field $K$ is of characteristic zero, the Weyl module $V(\gamma_1)$ for $\fg$ is irreducible and has the Weyl character $\chi(\gamma_1)$.  
We have the third Steinberg tensor product theorem, regarding reduction mod $p$ modules:

\begin{prop}\cite[Theorem 2.7]{Lin}
\label{lin}
For $\gamma_0\in X_r$ and $\gamma_1\in X^+$, there are isomorphisms of $G$-modules
\begin{equation*}\drr(\gamma_0+p^r\gamma_1)\cong\drr(\gamma_0)\otimes\Delta(\gamma_1)^{[r]},\ \ \ \ 
\nrr(\gamma_0+p^r\gamma_1)\cong\nrr(\gamma_0)\otimes\nabla(\gamma_1)^{[r]}.\end{equation*}
\end{prop}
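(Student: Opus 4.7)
The plan is to promote the quantum Steinberg isomorphism \eqref{stforU'} to an isomorphism of minimal admissible $\widetilde U_\zeta$-lattices, and then reduce mod $p$. Fix highest weight vectors $v_0\in L_\zeta(\gamma_0)$ and $v_1\in V(\gamma_1)$, and set $\widetilde L^{\min}_\zeta(\gamma_0):=\widetilde U_\zeta\cdot v_0\subset L_\zeta(\gamma_0)$ and $\widetilde V_\sco(\gamma_1):=U_{\sco}(\fg)\cdot v_1\subset V(\gamma_1)$, where $U_\sco(\fg)$ denotes the Kostant $\sco$-form of $U(\fg)$. Via Lusztig's quantum Frobenius homomorphism $\widetilde U_\zeta\to U_\sco(\fg)$ (an integral refinement of the map appearing in the definition of $V(\gamma_1)^{[1]}$), pull $\widetilde V_\sco(\gamma_1)$ back to a $\widetilde U_\zeta$-submodule $\widetilde V_\sco(\gamma_1)^{[1]}$ inside $V(\gamma_1)^{[1]}$, and form
\[
\widetilde N:=\widetilde L^{\min}_\zeta(\gamma_0)\otimes_\sco \widetilde V_\sco(\gamma_1)^{[1]}
\]
with the standard $\widetilde U_\zeta$-action through the coproduct.

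By \eqref{stforU'}, base changing to $K$ yields $\widetilde N_K\cong L_\zeta(\gamma_0)\otimes V(\gamma_1)^{[1]}\cong L_\zeta(\gamma_0+p^r\gamma_1)$, so $\widetilde N$ is an $\sco$-free admissible lattice in $L_\zeta(\gamma_0+p^r\gamma_1)$. I would then verify that $\widetilde N=\widetilde U_\zeta\cdot(v_0\otimes v_1)$, i.e.\ that $\widetilde N$ is the \emph{minimal} admissible lattice. This uses the Lusztig PBW-type factorization of $\widetilde U_\zeta^-$: divided powers $F_\alpha^{(n)}$ with $n<p^r$ lie in the ``small'' part and act trivially on the Frobenius-twisted factor (the quantum Frobenius kills them), while the $F_\alpha^{(np^r)}$ lie in the ``higher'' part and act on the second factor as $F_\alpha^{(n)}\in U_\sco(\fg)$. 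Since $\gamma_0\in X_r$, the small part already generates $\widetilde L^{\min}_\zeta(\gamma_0)$ from $v_0$, and the higher part generates $\widetilde V_\sco(\gamma_1)^{[1]}$ from $v_1$; a careful examination of $\Delta(F_\alpha^{(n)})(v_0\otimes v_1)$ shows that these two commuting actions together produce all of $\widetilde N$.

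Reducing mod $p$ is then straightforward. By definition, $(\widetilde L^{\min}_\zeta(\gamma_0))_k=\drr(\gamma_0)$. The Kostant form satisfies $U_\sco(\fg)\otimes_\sco k=\operatorname{Dist}(G)$, and the reduction mod $p$ of Lusztig's quantum Frobenius factors through the $r$-th geometric Frobenius of $G$ in the sense of \S\ref{sssredmop}; hence $(\widetilde V_\sco(\gamma_1)^{[1]})_k\cong \Delta(\gamma_1)^{[r]}$ as $G$-modules. Tensor product commutes with base change, so
\[
\drr(\gamma_0+p^r\gamma_1)=\widetilde N_k\cong\drr(\gamma_0)\otimes\Delta(\gamma_1)^{[r]},
\]
where the first equality uses the established minimality of $\widetilde N$.

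The costandard statement follows by the dual construction: take the maximal admissible lattice $\widetilde L^{\max}_\zeta(\gamma_0)$ and the lattice in $V(\gamma_1)^{[1]}$ dual to $\widetilde V_\sco(\gamma_1)^{[1]}$, and tensor; alternatively, apply the contravariant duality functor to the $\drr$ isomorphism. The main obstacle is the minimality verification in the middle paragraph: the quantum divided powers interact with the coproduct in a rather intricate way, and one must check that no ``extra'' elements of $\widetilde N$ lie outside $\widetilde U_\zeta\cdot(v_0\otimes v_1)$. This is exactly the content of Lusztig's analysis of the small/higher decomposition of the integral form, and is the technical heart of any Steinberg-type theorem at $p^r$-th roots of unity.
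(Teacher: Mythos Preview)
The paper does not supply its own proof of this proposition; it is quoted directly from \cite[Theorem~2.7]{Lin} without argument. Your outline is the natural strategy and is along the lines one expects from Lin's paper: promote the quantum Steinberg isomorphism \eqref{stforU'} to an isomorphism of minimal admissible $\sco$-lattices via Lusztig's integral quantum Frobenius, then base-change to $k$. You have correctly isolated the only substantive step --- that the tensor lattice $\widetilde N$ is cyclic on $v_0\otimes v_1$ --- and correctly attributed it to the small/higher factorization of $\widetilde U_\zeta^-$ together with the vanishing of the small divided powers on the Frobenius-twisted factor. To turn the sketch into a complete proof you would need to carry out the coproduct calculation for $F_\alpha^{(n)}(v_0\otimes v_1)$ and run the induction on weight explicitly; the remainder of your argument (the identification of $(\widetilde V_\sco(\gamma_1)^{[1]})_k$ with $\Delta(\gamma_1)^{[r]}$ and the dual statement for $\nrr$) is routine.
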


From now on, we always write $\gamma\in X^+$ as $\gamma=\gamma_0+p^r\gamma_1$ (uniquely) with $\gamma_0\in X_r$ and $\gamma_1\in X^+$.
Define $$\prdel(\gamma):=L(\gamma_0)\otimes\Delta(\gamma_1)^{[r]}$$ and $$\prnab(\gamma):=L(\gamma_0)\otimes\nabla(\gamma_1)^{[r]}.$$ 
Then Proposition \ref{lin} gives the following.

\begin{cor}\label{drrsurjpr}
For $\gamma\in X^+$, we have 
$$\drr(\gamma)\surj\prdel(\gamma),\ \ \ \ \ \prnab(\gamma)\inj \nrr(\gamma).$$
\end{cor}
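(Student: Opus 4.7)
The plan is to read this off immediately from Proposition \ref{lin} together with the surjections in \eqref{delsurjdrr} and its dual. The two assertions are formally dual, so I only sketch the first one; the second follows by replacing $\Delta$'s with $\nabla$'s and reversing arrows.

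First, write $\gamma = \gamma_0 + p^r\gamma_1$ with $\gamma_0 \in X_r$ and $\gamma_1 \in X^+$, and invoke Proposition \ref{lin} to identify
\[
\drr(\gamma) \;\cong\; \drr(\gamma_0) \otimes \Delta(\gamma_1)^{[r]}.
\]
Next, apply the surjection $\drr(\gamma_0) \twoheadrightarrow L(\gamma_0)$ from \eqref{delsurjdrr}, and tensor it on the right with $\Delta(\gamma_1)^{[r]}$. Since tensor product of $k$-modules (equivalently of rational $G$-modules over $k$) is exact on the left input, the result is a surjection
\[
\drr(\gamma_0) \otimes \Delta(\gamma_1)^{[r]} \;\twoheadrightarrow\; L(\gamma_0) \otimes \Delta(\gamma_1)^{[r]} \;=\; \prdel(\gamma).
\]
Composing with the isomorphism from Proposition \ref{lin} yields the required surjection $\drr(\gamma) \twoheadrightarrow \prdel(\gamma)$.

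For the costandard version, one uses instead the inclusion $L(\gamma_0) \hookrightarrow \nrr(\gamma_0)$ coming from the dual of \eqref{delsurjdrr}, tensors with $\nabla(\gamma_1)^{[r]}$ (which again preserves injections, tensor product over a field being exact), and combines with the second isomorphism of Proposition \ref{lin}.

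I do not anticipate a serious obstacle: the whole content is already packaged in Proposition \ref{lin} and \eqref{delsurjdrr}. The only point worth a sentence of care is that the tensor factor being a Frobenius twist does not cause trouble, because the exactness used is exactness of $-\otimes_k V$ for a $k$-vector space $V$ — which is insensitive to how $G$ (or $U_\zeta$) acts on $V$.
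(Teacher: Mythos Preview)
Your proof is correct and is exactly the argument the paper has in mind: the paper simply says ``Then Proposition \ref{lin} gives the following'' without further elaboration, and you have spelled out the implicit step of tensoring the surjection $\drr(\gamma_0)\surj L(\gamma_0)$ from \eqref{delsurjdrr} (and its dual) with the Frobenius-twisted factor.
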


The modules $\prdel(\gamma)$ for different $r\geq 1$ form a descending chain
$$\Delta^p(\gamma)\surj\Delta^{p^2}(\gamma)\surj\cdots\surj \prdel(\gamma)\surj\cdots.$$ This can be seen from the definition and the tensor product theorem \eqref{stforG'} as follows. Write \[\gamma=\gamma_0+p^{r-1}(\gamma_1+p\gamma_2)=(\gamma_0+p^{r-1}\gamma_1)+p^r\gamma_2\] with $\gamma_0\in X_{r-1}$, $\gamma_1\in X_1$, and $\gamma_2\in X^+$. Then we have
\begin{align*}
\Delta^{p^{r-1}}(\gamma)&=L(\gamma_0)\otimes\Delta(\gamma_1+p\gamma_2)^{[r-1]}\\
&\surj L(\gamma_0)\otimes (L(\gamma_1)\otimes\Delta(\gamma_2)^{[1]})^{[r-1]}\\
&\cong L(\gamma_0)\otimes L(\gamma_1)^{[r-1]}\otimes\Delta(\gamma_2)^{[r]}\\
&\cong 
L(\gamma_0+p^{r-1}\gamma_1)\otimes\Delta(\gamma_2)^{[r]}\\
&=\prdel(\gamma)
\end{align*}
for each $r>1$. The second line follows from combining \eqref{delsurjdrr} and Corollary \ref{drrsurjpr}. The fourth line follows from \eqref{stforG'}.

There is no obvious relation between the $\drr(\gamma)$ for $r\geq 1$. Instead, we know the characters of the modules $\drr(\gamma)$ in most (possibly all) cases since $\ch\drr(\gamma)=\ch L_\zeta(\gamma)$. 
Suppose $l=p^r$ is a KL-good integer. By this we mean that the Kazhdan-Lusztig correspondence gives an equivalence between $\umd$ and a certain subcategory of the affine Lie category O associated to the root system $R$. (See \cite[\S7]{Tani} or \cite[\S2.2]{mine}. This is a very mild condition with no known non-KL-good integer in all types.)\label{klgood}
Recall the Kazhdan-Lusztig polynomial $P_{x,y}\in \Z[t,t\inv]$ defined for each pair $x,y\in W_{p^r}$ and let for $J\subset S_{p^r}$ and $y,w\in W^J$
\begin{equation}\label{paraklpoly}
P^J_{y,w}:=\sum_{x\in W_J}(-1)^{l(x)}P_{yx,w}.
\end{equation}
For $\lambda\in \overline{^{p^r}\!C^-_\Z},w\in W_{p^r}^+$,
$$\ch\drr(\gamma)=\sum_{y\in W_{p^r}, y\in W_{p^r}^+(\lambda)} (-1)^{l(w)-l(y)}P^J_{y,w}(-1) \chi(y.\lambda)$$ where $J=\{s\in S_{p^r}\ |\ s.\lambda=\lambda\}$ and $\gamma=w.\lambda$.

We can use the characters to actually show that there is no map between $\drr(\gamma)$ and $\delr_{r'}(\gamma)$ in general for $r\neq r'\in \Z$. 
For $\chi,\chi'\in\Z[X]$, we say $\chi\geq\chi'$ 
if $\chi-\chi'\in \Z_{\geq 0}[X]$
and $\chi\not\geq\chi'$ otherwise.
Whether we require $r<r'$ or we require $r'<r$, we can easily find a case that 
$\ch \drr(\gamma)\not\geq\ch\delr_{r'}(\gamma)$. (See \S\ref{a1exs}.)
Since they have the same irreducible head $L(\gamma)$ and
$$[\drr(\gamma):L(\gamma)]=[\delr_{r'}(\gamma):L(\gamma)]=1,$$
any nonzero map between $\drr(\gamma)$ and $\delr_{r'}(\gamma)$ is surjective.
So $\ch \drr(\gamma)\not\geq\ch\delr_{r'}(\gamma)$ implies 
$$\Hom_G(\drr(\gamma),\delr_{r'}(\gamma))=0.$$



\section{Comparing the Jantzen sum formulas}

The Jantzen filtration on standard modules $\Delta(\gamma)\in\gmd$ is fully discussed in Jantzen's book \cite[II.8]{J}. 
An important consequence of the filtration is the Jantzen sum formula we state below. 
We need first to introduce a notation. Let $\nu_p$ be the $p$-adic valuation on $\Z$. That is, if $n\in \Z$ has the form $n=p^rd$ with $p,d$ relatively prime, then $\nu_p(n)=r$.
Recall the reflection $s_{\alpha,m}$ defined by 
$$s_{\alpha,m}(\gamma)=\gamma-(\langle \gamma,\alpha^\vee\rangle-m)\alpha$$ for $\gamma\in X\otimes_\Z\R$.

\begin{prop}\cite[II.8.19]{J}\label{pJsum}
Let $\gamma\in X^+$. There is a filtration 
$$\Delta(\gamma)=V^0\supset V^1\supset\cdots $$
of $\Delta(\gamma)$ in $\gmd$ such that 
\begin{equation}\label{Jsum}
\sum_{i>0}\ch V^i=\sum_{\alpha\in R^+}\sum_{0<mp<\langle\gamma+\rho,\alpha^\vee\rangle} \nu_p(mp)\chi(s_{\alpha,mp}.\gamma)
\end{equation}
and 
$$\Delta(\gamma)/V^1\cong L(\gamma).$$
\end{prop}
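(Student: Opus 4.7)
The plan is to recall Jantzen's classical construction of the filtration on $\Delta(\gamma)$ via a contravariant form on an integral lattice. Work over the DVR $\Z_{(p)}$ (with uniformizer $p$ and residue field embedded into $k$), fix the characteristic-zero Weyl module $\Delta_\Q(\gamma)=\uzg\cdot v_+/(\text{kernel})$, and take the minimal admissible $\Z_{(p)}$-lattice $\w\Delta(\gamma):=\uzg\otimes_\Z\Z_{(p)}\cdot v_+$ generated by a highest weight vector $v_+$. Construct a contravariant bilinear form $\langle-,-\rangle:\w\Delta(\gamma)\times\w\Delta(\gamma)\to \Z_{(p)}$ from the Chevalley anti-involution (swapping $E_\alpha$ with $F_\alpha$ and fixing $K_\alpha$), normalized so $\langle v_+,v_+\rangle=1$. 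Contravariance forces distinct weight spaces to be orthogonal, so each weight space $\w\Delta(\gamma)_\mu$ is a finitely generated free $\Z_{(p)}$-module carrying a well-defined Gram matrix $G_\mu$.

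Set $\w V^i:=\{v\in\w\Delta(\gamma)\mid\langle v,\w\Delta(\gamma)\rangle\subseteq p^i\Z_{(p)}\}$ and let $V^i$ be its image in $\Delta(\gamma)=\w\Delta(\gamma)\otimes_{\Z_{(p)}} k$. Contravariance makes each $V^i$ a $G$-submodule, and $V^0=\Delta(\gamma)$. The quotient $V^0/V^1$ is the image of $\Delta(\gamma)$ in its dual under the reduced form, a nonzero simple module because the form is contravariant and nondegenerate on the simple head; this yields $\Delta(\gamma)/V^1\cong L(\gamma)$.

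For the sum formula, use the elementary linear-algebra identity
\[\sum_{i\geq 1}\dim_k V^i_\mu=\nu_p(\det G_\mu),\]
which reduces the claim to a $p$-adic valuation computation for the Shapovalov-type determinant of $G_\mu$. That determinant factors, up to a unit, as
\[\det G_\mu=\prod_{\alpha\in R^+}\prod_{m\geq 1}\bigl(\langle\gamma+\rho,\alpha^\vee\rangle-m\bigr)^{P(\gamma-m\alpha-\mu)},\]
where $P$ is Kostant's partition function. Multiplying by $e^\mu$, summing over $\mu$, and applying the identity $\sum_\mu P(\nu-\mu)e^\mu=\ch M(\nu)$ for Verma characters together with Weyl's alternating-sum formula expressing $\chi(s_{\alpha,m}.\gamma)$ in terms of Verma characters, the contribution from each pair $(\alpha,m)$ with $s_{\alpha,m}.\gamma<\gamma$ (equivalently $0<m<\langle\gamma+\rho,\alpha^\vee\rangle$) becomes $\nu_p(m)\,\chi(s_{\alpha,m}.\gamma)$. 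Since $\nu_p(m)=0$ whenever $p\nmid m$, only $p$-divisible indices survive, and re-indexing $m\mapsto mp$ produces exactly the right-hand side of \eqref{Jsum}.

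The main obstacle is the Shapovalov determinant factorization, proved by a delicate induction on the height of $\gamma-\mu$ using the $[E_\alpha,F_\beta]$ commutation relations in $\uzg$; the secondary technical step is converting the product of $p$-adic valuations into the alternating sum $\chi(s_{\alpha,m}.\gamma)$, which relies on Weyl's character formula and the identity $\chi(w\gamma)=\det(w)\chi(\gamma)$ from \eqref{weylchreleq}. Given these two classical ingredients, all remaining assembly is bookkeeping.
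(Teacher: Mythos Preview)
The paper does not supply its own proof of this proposition; it simply cites Jantzen \cite[II.8.19]{J} and then uses the statement. So there is no ``paper's proof'' to compare against beyond Jantzen's original argument.

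Your outline follows Jantzen's strategy (contravariant form on a $\Z_{(p)}$-lattice, filtration by the $p$-adic valuation of the form, identification of $\sum_i\ch V^i$ with $\sum_\mu\nu_p(\det G_\mu)e^\mu$), and that part is fine. The problem is the determinant formula you quote. The product
\[
\prod_{\alpha\in R^+}\prod_{m\geq 1}\bigl(\langle\gamma+\rho,\alpha^\vee\rangle-m\bigr)^{P(\gamma-m\alpha-\mu)}
\]
is the Shapovalov determinant for the \emph{Verma module} $M(\gamma)$, not for the Weyl module $\Delta(\gamma)$. On $\Delta(\gamma)$ the weight space at $\mu$ has dimension $\dim\Delta(\gamma)_\mu$, not $P(\gamma-\mu)$, so the Gram matrix has a different size and a different determinant. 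Jantzen's actual computation (see \cite[II.8.16--8.19]{J}) proceeds by a recursion on the Weyl module itself and yields exponents equal to the multiplicity of $e^\mu$ in the Weyl character $\chi(s_{\alpha,m}.\gamma)$, not the partition function values; this is precisely what makes the sum assemble directly into $\sum\nu_p(mp)\chi(s_{\alpha,mp}.\gamma)$.

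Your attempted bridge---``applying the identity $\sum_\mu P(\nu-\mu)e^\mu=\ch M(\nu)$ together with Weyl's alternating-sum formula''---does not close the gap: starting from the Verma determinant you would get a sum of Verma characters $\ch M(\gamma-m\alpha)$ weighted by $\nu_p$-values, and there is no cancellation mechanism in your sketch that converts this into the finite sum of Weyl characters over $0<mp<\langle\gamma+\rho,\alpha^\vee\rangle$. Either replace the determinant formula with Jantzen's Weyl-module version, or make explicit how the Verma-module Jantzen filtration descends to the finite-dimensional quotient (which is itself a nontrivial argument).
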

Let's denote the right hand side of \eqref{Jsum} by $\chi_J(\gamma)$, and put
$$\chi_J(\gamma,p^r):=\sum_{\alpha\in R^+}\sum_{0<mp^r<\langle\gamma+\rho,\alpha^\vee\rangle} \chi(s_{\alpha,mp^r}.\gamma).$$

We now rewrite the formula \eqref{Jsum} as

\begin{align}\begin{split}\label{Jsum'}
\chi_J(\gamma)&=\sum_{\alpha\in R^+}\sum_{0<mp<\langle\gamma+\rho,\alpha^\vee\rangle} \nu_p(mp)\chi(s_{\alpha,mp}.\gamma)\\
&=\sum_{\alpha\in R^+}(\sum_{0<mp<\langle\gamma+\rho,\alpha^\vee\rangle} \chi(s_{\alpha,mp}.\gamma)+\sum_{0<mp^2<\langle\gamma+\rho,\alpha^\vee\rangle} \chi(s_{\alpha,mp^2}.\gamma)
+\cdots)\\
&=\sum_r \chi_J(\gamma,p^r).
\end{split}\end{align}

The reason we do this is that
the Jantzen sum formula works for the quantum case, with a different formula:
\begin{prop}\cite[\S10]{APW}\label{qjsum}
Let $\gamma\in X^+$ and $r\geq 1$. There is a filtration of the $U_\zeta$-module $\Delta_\zeta(\gamma)$
$$\Delta_\zeta(\gamma)=V_\zeta^0\supset V_\zeta^1\supset\cdots $$
such that 
\begin{equation}\label{qJsum}
\sum_{i>0}\ch V_\zeta^i=\chi_J(\gamma,p^r)=\sum_{\alpha\in R^+}\sum_{0<mp^r<\langle\gamma+\rho,\alpha^\vee\rangle} \chi(s_{\alpha,mp^r}.\gamma)
\end{equation}
and 
$$\Delta_\zeta(\gamma)/V_\zeta^1\cong L_\zeta(\gamma).$$
\end{prop}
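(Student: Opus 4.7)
The plan is to mimic Jantzen's proof of the sum formula for algebraic-group Weyl modules (\cite[II.8.17--19]{J}) inside a DVR deformation of $U_\zeta$ with residue field $K$. The overall mechanism---contravariant form on an integral lift, filtration by the $\pi$-adic valuation of the form, abstract sum formula---is formal; the specific shape of the right-hand side in \eqref{qJsum} emerges from computing a quantum Shapovalov determinant at $v=\zeta$.

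First, I would choose a DVR $A$ whose residue field is $K$ and whose uniformizer $\pi$ plays the role of $v-\zeta$ (for instance, a suitable localization and extension of $\mathscr A$ at $(v-\zeta)$, adjusted so that $A/(\pi)\cong K$). Over $A$ construct the integral Weyl module $V_A(\gamma)$, freely generated from a highest weight vector $v_\gamma$, and endow it with the unique $A$-valued contravariant form $(-,-)_A$ normalized by $(v_\gamma,v_\gamma)=1$. Then $V_A(\gamma)$ is $A$-free with free weight spaces, $V_A(\gamma)\otimes_A K\cong \Delta_\zeta(\gamma)$, and the form is non-degenerate at the generic point. Setting
\[
V_A^i := \{v \in V_A(\gamma) : (v, V_A(\gamma))_A \subset \pi^i A\}
\]
and taking $V_\zeta^i$ to be the image of $V_A^i$ in $\Delta_\zeta(\gamma)$, Jantzen's DVR lemma (which uses only contravariance of the form and $A$-freeness of the module) produces the filtration, the identification $\Delta_\zeta(\gamma)/V_\zeta^1 \cong L_\zeta(\gamma)$, and the abstract formula
\[
\sum_{i>0} \ch V_\zeta^i \;=\; \sum_{\mu} \nu_\pi\!\bigl(\det (-,-)_{A,\mu}\bigr)\, e(\mu),
\]
the sum running over the weights $\mu$ of $V_A(\gamma)$.

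The substance of the proof is then identifying the right-hand side with $\chi_J(\gamma,p^r)$. Up to an $A^\times$-unit, the quantum Shapovalov formula (de Concini--Kac, as used in \cite[\S10]{APW}) expresses each $\det(-,-)_{A,\mu}$ as an explicit product over $\alpha \in R^+$ and positive integers $m$ of quantum integer factors $[\langle \gamma+\rho,\alpha^\vee\rangle - m]_\alpha$ raised to partition-function exponents. Because $\zeta$ is an odd primitive $p^r$-th root of unity---so that $l_\alpha=l=p^r$ in the cases relevant here (cf.\ the Remark following \S\ref{sslink})---the factor $[n]_\alpha$ has a simple zero at $v=\zeta$ exactly when $p^r \mid n$, so the surviving contributions correspond to $m = \langle\gamma+\rho,\alpha^\vee\rangle - jp^r$ with $j\geq 1$ and $jp^r < \langle\gamma+\rho,\alpha^\vee\rangle$. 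Summing the resulting weight-space generating functions and recognising them, via the standard Kostant-type combinatorial identity, as Weyl characters $\chi(s_{\alpha,jp^r}.\gamma)$, reproduces $\chi_J(\gamma,p^r)$.

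The principal obstacle is exactly this last identification: correctly extracting which Shapovalov factors acquire a $\pi$-adic zero at $\zeta$ and repackaging the surviving partition-function data as Weyl characters (in particular, handling the BGG-style alternating sum that distinguishes Weyl from Verma characters). The oddness of $p^r$ (which follows from the standing assumption $p>2$) is essential here to rule out extra vanishing coming from the $l$ vs.\ $l_\alpha$ discrepancy; everything else is a formal consequence of the DVR deformation machinery.
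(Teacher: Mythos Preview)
The paper does not supply its own proof of this proposition; it is simply quoted from \cite[\S10]{APW}. Your sketch is essentially the argument carried out there (and is the standard one): deform to a generic parameter over a DVR with residue field $K$, use the contravariant form and Jantzen's valuation lemma to produce the filtration with $\Delta_\zeta(\gamma)/V_\zeta^1\cong L_\zeta(\gamma)$, and then read off the $\pi$-adic order of the weight-space Gram determinants from the de~Concini--Kac/Shapovalov formula, where the oddness of $p^r$ forces $[n]_\alpha$ to vanish (simply) at $v=\zeta$ precisely when $p^r\mid n$. So your proposal is correct and coincides with the cited approach.
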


We can draw an observation from these formulas.
\begin{prop}\label{conseqjsum}
For $\gamma,\gamma'\in X^+$, we have
 $$[\Delta_\zeta(\gamma):L_\zeta(\gamma')]\neq 0\Rightarrow [\Delta(\gamma):L(\gamma')]\neq 0.$$
\end{prop}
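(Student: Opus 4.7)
The plan is to combine the classical Jantzen sum formula (Proposition~\ref{pJsum}) and the quantum Jantzen sum formula (Proposition~\ref{qjsum}) through the decomposition \eqref{Jsum'}, and then extract the desired composition-multiplicity positivity from the linear independence of the simple characters $\{\ch L(\nu)\}_{\nu \in X^+}$ in $\Z[X]$. The reducing modules $\drr$ from \S\ref{reducingmodules} serve to convert the quantum Jantzen character into a non-negative combination of $\{\ch L(\nu)\}$, and the analogous construction at other primitive $p^{r'}$-th roots of unity will do the same for every $\chi_J(\gamma,p^{r'})$.

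First I would dispose of the trivial case $\gamma=\gamma'$, where both multiplicities equal $1$. Assume $\gamma'<\gamma$. By Proposition~\ref{qjsum}, $L_\zeta(\gamma')$ is a composition factor of $V_\zeta^1$, so the coefficient $M_{\gamma'}:=\sum_{i>0}[V_\zeta^i:L_\zeta(\gamma')]$ in $\chi_J(\gamma,p^r)=\sum_\mu M_\mu\,\ch L_\zeta(\mu)$ is strictly positive. Re-expanding each $\ch L_\zeta(\mu)=\ch\drr(\mu)=\sum_\nu[\drr(\mu):L(\nu)]\,\ch L(\nu)$, a $\Z_{\ge 0}$-combination with coefficient $1$ at $\nu=\mu$, gives
\[
\chi_J(\gamma,p^r)\;=\;\sum_\nu\widetilde M_\nu\,\ch L(\nu),\qquad \widetilde M_\nu\ge 0,\quad \widetilde M_{\gamma'}\ge M_{\gamma'}>0.
\]
An identical reasoning (substituting a primitive $p^{r'}$-th root of unity for $\zeta$ and the corresponding reducing module for $\drr$) shows that every $\chi_J(\gamma,p^{r'})$ is likewise a $\Z_{\ge 0}$-combination of the $\ch L(\nu)$'s, so by \eqref{Jsum'} the difference $S:=\chi_J(\gamma)-\chi_J(\gamma,p^r)=\sum_{r'\ne r}\chi_J(\gamma,p^{r'})$ also expands non-negatively in $\{\ch L(\nu)\}$, say $S=\sum_\nu S_\nu\,\ch L(\nu)$ with $S_\nu\ge 0$.

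Proposition~\ref{pJsum} gives $\chi_J(\gamma)=\sum_\nu N_\nu\,\ch L(\nu)$ with $N_\nu\ge 0$. Reading the identity $\chi_J(\gamma)=\chi_J(\gamma,p^r)+S$ in the linearly independent basis $\{\ch L(\nu)\}$ forces $N_\nu=\widetilde M_\nu+S_\nu$, so in particular $N_{\gamma'}\ge\widetilde M_{\gamma'}>0$. Since $V^i\subseteq V^1$ for all $i\ge 1$, the positivity $N_{\gamma'}=\sum_{i>0}[V^i:L(\gamma')]>0$ forces $[V^1:L(\gamma')]>0$; combined with $\Delta(\gamma)/V^1\cong L(\gamma)$ and $\gamma'\ne\gamma$, this gives $[\Delta(\gamma):L(\gamma')]=[V^1:L(\gamma')]>0$, as required.

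The main (minor) obstacle is the claim that each $\chi_J(\gamma,p^{r'})$ with $r'\ne r$ admits the non-negative $\{\ch L(\nu)\}$-expansion used above; this is settled by a verbatim repetition of the $\drr$ construction of \S\ref{reducingmodules}, taking a $p$-modular system for a primitive $p^{r'}$-th root of unity in place of $\zeta$, so that the mod-$\pi$ reduction of the minimal lattice of the corresponding simple module is a $G$-module with character $\ch L_{\zeta'}(\mu)$.
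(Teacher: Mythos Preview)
Your argument is correct and follows essentially the same route as the paper's proof: both use the decomposition $\chi_J(\gamma)=\sum_{r'}\chi_J(\gamma,p^{r'})$ from \eqref{Jsum'}, observe via Proposition~\ref{qjsum} (applied at each $p^{r'}$-th root of unity) that every summand is a $\Z_{\ge 0}$-combination of irreducible $G$-characters, and then read off that the $\ch L(\gamma')$-coefficient in $\chi_J(\gamma)$ is at least its (positive) contribution from the $p^r$-th summand. The only cosmetic difference is that you make the passage $\ch L_\zeta(\mu)=\ch\drr(\mu)$ explicit via the reducing modules of \S\ref{reducingmodules}, whereas the paper simply remarks that $U_\zeta$-characters are $G$-characters.
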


\begin{proof}
Suppose $[\Delta_\zeta(\gamma):L_\zeta(\gamma')]\neq 0$. 
Since $[\Delta_\zeta(\gamma):L_\zeta(\gamma)]= [\Delta(\gamma):L(\gamma)]=1,$ we may assume that $\gamma>\gamma'$. 

By Proposition \ref{pJsum}, $[\Delta(\gamma):L(\gamma')]\neq 0$ if and only if $\ch L(\gamma')$ has a nonzero coefficient when we write $\chi_J(\gamma)$ as a ($\Z$-)linear combination of the characters of the irreducible $G$-modules (with non-negative coefficients).
In \eqref{Jsum'}, which says  $$\chi_J(\gamma)=\sum_r\chi_J(\gamma,p^r),$$ each $\chi_J(\gamma,p^r)$ is also a non-negative sum of irreducible $G$-characters by Proposition \ref{qjsum}, since all $U_\zeta$-characters are also $G$-characters. 
Thus, the claim is proved if we check that $\ch L(\gamma')$ has a nonzero coefficient when we write $\chi_J(\gamma,p^e)$ as a linear combination of the characters of the irreducible $G$-modules, where $e$ is such that $\zeta$ is a primitive $p^e$-th root of unity. 
By Proposition \ref {qjsum} and the assumption $[\Delta_\zeta(\gamma):L_\zeta(\gamma')]\neq 0,$ 
the character $\chi_J(\gamma,p^e)$ has a positive coefficient for $\ch L_\zeta(\gamma')$ when we write it as a non-negative sum of irreducible $U_\zeta$-characters. 
But $\ch L_\zeta(\gamma')$, when written as a sum of irreducible $G$-characters, has a nonzero $\ch L(\gamma')$ term.
\end{proof}

\begin{cor}\label{jscor}
For $\gamma,\gamma'\in X^+$ and $r\geq 1$, the module $\Delta(\gamma)$ has a composition factor $L(\gamma')$ if $\gamma'<\gamma$ are mirror images under the reflection through a wall of the $p^r$-facet containing $\gamma$.
\end{cor}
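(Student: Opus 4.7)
The plan is to reduce to the quantum case via Proposition \ref{conseqjsum} and extract the claimed composition factor from the quantum Jantzen sum formula of Proposition \ref{qjsum}. So the goal becomes to show that $[\Delta_\zeta(\gamma):L_\zeta(\gamma')]\neq 0$, after which Proposition \ref{conseqjsum} yields the corollary.

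Write the given wall as $H_{\alpha,mp^r}$ with $\alpha\in R^+$ and $m\in\Z$, so that $\gamma'=s_{\alpha,mp^r}.\gamma$. The hypothesis $\gamma'<\gamma$ forces $\langle\gamma+\rho,\alpha^\vee\rangle>mp^r$, and the dominance of $\gamma'$ (via $\langle\gamma'+\rho,\alpha^\vee\rangle=2mp^r-\langle\gamma+\rho,\alpha^\vee\rangle>0$) forces $mp^r>0$. Thus the pair $(\alpha,m)$ lies in the index set of $\chi_J(\gamma,p^r)$, and the Weyl character $\chi(\gamma')$ appears among its summands with coefficient $+1$.

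Next, I would straighten each summand $\chi(s_{\beta,kp^r}.\gamma)$ via $\chi(w.x)=\det(w)\chi(x)$ to $\pm\chi(\nu)$ with $\nu$ dominant, and then write $\chi_J(\gamma,p^r)=\sum_\nu b_\nu\chi(\nu)$. Proposition \ref{qjsum} identifies this with $\sum_{i\geq 1}\ch V_\zeta^i$, a sum of characters of submodules of $\Delta_\zeta(\gamma)$, so by strong linkage every $\nu$ with $b_\nu\neq 0$ satisfies $\nu\uparrow\gamma$ and $\nu\neq\gamma$. Because $\gamma$ and $\gamma'$ lie in adjacent $p^r$-facets, $\gamma$ covers $\gamma'$ in the uparrow order, so the only $\nu$ among those appearing with $[\Delta_\zeta(\nu):L_\zeta(\gamma')]\neq 0$ (equivalently with $\gamma'\uparrow\nu$) is $\nu=\gamma'$. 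Expanding each $\chi(\nu)$ into irreducible characters, the coefficient of $\ch L_\zeta(\gamma')$ in $\chi_J(\gamma,p^r)$ is therefore exactly $b_{\gamma'}$.

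The final step is to verify $b_{\gamma'}\geq 1$. Any other pair $(\beta,k)$ producing $\pm\chi(\gamma')$ after $W$-straightening requires some $w\in W$ with $w\,s_{\beta,kp^r}=s_{\alpha,mp^r}$ in $W_{p^r}$ (using triviality of the $W_{p^r}$-stabilizer of $\gamma$ in the regular case to lift the equality of actions to an equality of group elements). Writing $s_{\delta,n}=t_{n\delta}s_\delta$ and equating rotation and translation parts forces $w=s_\alpha s_\beta$ and $kw(\beta)=m\alpha$; this reduces to $(k\langle\beta,\alpha^\vee\rangle-m)\alpha=k\beta$, whose only solution with $\beta\in R^+$ and $k>0$ is $(\beta,k)=(\alpha,m)$. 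Hence $b_{\gamma'}=1$, so $[\Delta_\zeta(\gamma):L_\zeta(\gamma')]\geq [V_\zeta^1:L_\zeta(\gamma')]\geq 1$, and Proposition \ref{conseqjsum} finishes the argument. The main obstacle is the no-cancellation step when $\gamma$ is singular: the translation-part calculation above assumes trivial $W_{p^r}$-stabilizer, and in the singular case I would translate through a functor $\tothe$ from a nearby regular weight to transfer the conclusion.
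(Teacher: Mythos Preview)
Your proof is correct and follows essentially the same route as the paper: reduce to $[\Delta_\zeta(\gamma):L_\zeta(\gamma')]\neq 0$ via Proposition \ref{conseqjsum}, then extract this from Proposition \ref{qjsum} by arguing that among all terms $\chi(s_{\alpha,mp^r}.\gamma)$ in $\chi_J(\gamma,p^r)$ only $\chi(\gamma')$ can contribute to $\ch L_\zeta(\gamma')$ (the paper asserts this in one line using irreducible $G$-characters, whereas you supply the cover-relation and no-cancellation details). Your explicit verification that $b_{\gamma'}=1$ and your flagged concern about the singular case are more than the paper's terse proof writes down, but they do not constitute a different strategy.
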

\begin{proof}
We take the integer $r$ as in the statement. 
That is, 
we have $\gamma'=s_{\beta,np^r}.\gamma \in X^+$ for an appropriate positive root $\beta$ and an integer $n$. 
Now observe in
$$\chi_J(\gamma,p^r)=\sum_{\alpha\in R^+}\sum_{0<mp^r<\langle\gamma+\rho,\alpha^\vee\rangle} \chi(s_{\alpha,mp^r}.\gamma)$$
that only $\chi(s_{\beta,np^r}.\gamma)$, among the Weyl characters appearing, has a nonzero $\ch L(s_{{\beta},np^r}.\gamma)$-term when it is written as a sum of irreducible $G$-characters. 
Necessarily, the multiplicity $[\Delta_\zeta(\gamma):L_\zeta(\gamma')]$ is nonzero.
Proposition \ref{conseqjsum} gives the corollary.
\end{proof}

\section{Reducing morphisms modulo $p$}

We use the reduction mod $p$ procedure to construct many nontrivial elements in $\Hom$ and $\Ext^n$ spaces for $G$-mod.
\begin{prop}\label{reducingext}
Let $M,N\in \umd$ and $\w M, \w N\in \w U_\zeta\operatorname{-mod}$ be admissible lattices of $M, N$ respectively. Then for all $n\geq 0$, $$\dim_k\Ext^n_G(\w M_k,\w N_k)\geq\dim_K\Ext^n_{U_\zeta}(M,N).$$
\end{prop}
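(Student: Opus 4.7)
The plan is a universal-coefficient-theorem (UCT) argument: pass through a single $\sco$-linear complex whose $K$-base change computes $\Ext^n_{U_\zeta}(M,N)$ and whose $k$-base change computes $\Ext^n_G(\w M_k,\w N_k)$, then compare dimensions via the UCT short exact sequence for $\sco$-modules.

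First I would construct a resolution $\w P_\bullet \to \w M$ in $\w U_\zeta\operatorname{-mod}$ by finitely-generated, $\sco$-free modules whose base changes are adapted to computing Ext in the two target categories: $\w P_\bullet \otimes_\sco K \to M$ should be a projective resolution in $\umd$, and each reduction $\w P_i \otimes_\sco k$ should be $\Ext^{>0}_G(-,\w N_k)$-acyclic in $\gmd$. A natural candidate is to lift finite-dimensional projective covers from $\umd$ to $\sco$-forms in $\w U_\zeta\operatorname{-mod}$; base change to $K$ is then straightforward, while the $\Ext$-acyclicity of the mod-$\pi$ reductions in $\gmd$ requires a separate argument.

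Next, set $C^\bullet := \Hom_{\w U_\zeta}(\w P_\bullet,\w N)$, a complex of finitely-generated $\sco$-modules (each $\w P_i$ being $\sco$-f.g.\ and $\sco$-free, and $\sco$ a PID). Because each $\w P_i$ and $\w N$ are $\sco$-finitely-generated and $\sco$-free, base change commutes with $\Hom$ termwise:
\[C^\bullet \otimes_\sco K \cong \Hom_{U_\zeta}(\w P_\bullet \otimes_\sco K,\,N), \qquad C^\bullet \otimes_\sco k \cong \Hom_G(\w P_\bullet \otimes_\sco k,\,\w N_k).\]
By the choice of resolution, the cohomology of the first complex is $\Ext^n_{U_\zeta}(M,N)$ and of the second is $\Ext^n_G(\w M_k,\w N_k)$.

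Finally, apply the UCT over $\sco$ to $C^\bullet$. Each cohomology $\w E^n := H^n(C^\bullet)$ is a finitely-generated $\sco$-module of the form $\sco^{r_n} \oplus T_n$ with $T_n$ torsion; then $r_n = \dim_K \Ext^n_{U_\zeta}(M,N)$, and the UCT short exact sequence
\[0 \to \w E^n \otimes_\sco k \to H^n(C^\bullet \otimes_\sco k) \to \tor_1^\sco(\w E^{n+1},k) \to 0\]
yields $\dim_k \Ext^n_G(\w M_k,\w N_k) \geq \dim_k(\w E^n \otimes_\sco k) \geq r_n$, which is the stated inequality. The main obstacle is the first step: since $\gmd$ has no projectives at all, one cannot naively lift a projective resolution from $\umd$ and declare its mod-$\pi$ reduction a projective resolution in $\gmd$. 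One must instead verify the subtler condition that the reductions are $\Ext^{>0}_G(-,\w N_k)$-acyclic, typically by a compatibility argument involving induced or standard-filtered modules realized uniformly across the three bases $\sco$, $K$, and $k$.
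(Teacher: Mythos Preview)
Your overall strategy---a universal-coefficient comparison over the DVR $\sco$---is exactly the one the paper uses, and your final short exact sequence is equivalent to what the paper extracts from the long exact sequence associated to $0\to\w N\xrightarrow{\pi}\w N\to\w N_k\to 0$. The difference is packaging: the paper never builds an explicit resolution $\w P_\bullet$, but works directly with the abstract groups $\Ext^n_{\w U_\zeta}(\w M,-)$ and then invokes the base-change results of Du--Scott \cite[(2.9), Theorem~3.2]{DS}, namely
\[
\Ext^n_{\w U_\zeta}(\w M,\w N)\otimes_\sco K\cong\Ext^n_{U_\zeta}(M,N)
\quad\text{and}\quad
\Ext^n_{\w U_\zeta}(\w M,\w N_k)\cong\Ext^n_G(\w M_k,\w N_k),
\]
to identify the two ends of the comparison.

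The gap in your argument is the step you yourself flag as the ``main obstacle'': you have not shown that the reductions $\w P_i\otimes_\sco k$ are $\Ext^{>0}_G(-,\w N_k)$-acyclic, and without this your identification $H^n(C^\bullet\otimes_\sco k)\cong\Ext^n_G(\w M_k,\w N_k)$ is unjustified. Saying it is ``typically'' done ``by a compatibility argument involving induced or standard-filtered modules'' is not a proof. In fact, the content of that missing step is precisely the second Du--Scott isomorphism above; once you have it, the explicit resolution is unnecessary and the paper's shorter route is available. So either cite \cite{DS} (as the paper does) and drop the resolution, or actually carry out the acyclicity verification---e.g.\ by lifting projectives in $\umd$ to $\sco$-forms with good/tilting filtrations whose reductions remain $\nabla$-filtered in $\gmd$, and then restricting $\w N_k$ accordingly. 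As written, the proposal is a correct outline with its hardest step left as an exercise.
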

\begin{proof}
The short exact sequence $$0\to\w N\xrightarrow{\pi}\w N\to \w N_k\to 0$$ of $\w U_\zeta$-modules induces the long exact sequence 
\begin{align}\begin{split}\label{les}
0\to&\Hom_{\w U_\zeta}(\w M,\w N)\xrightarrow{\pi}\Hom_{\w U_\zeta}(\w M,\w N)\to\Hom_{\w U_\zeta}(\w M,\w N_k)\to\cdots\\
\to&\Ext^n_{\w U_\zeta}(\w M,\w N)\xrightarrow{\pi}\Ext^n_{\w U_\zeta}(\w M,\w N)\to\Ext^n_{\w U_\zeta}(\w M, \w N_k)\to\\
\to&\Ext^{n+1}_{\w U_\zeta}(\w M,\w N)\xrightarrow{\pi}\Ext^{n+1}_{\w U_\zeta}(\w M,\w N)\to\cdots
\end{split}
\end{align}
of $\sco$-modules, where the map $\pi$ is multiplication by the generator of the maximal ideal of $\sco$.

We also have $$\Ext_{\w U_\zeta}^n(\w M,\w N)\otimes_\sco K\cong\Ext_{U_\zeta}^n( M,N),$$ by \cite[(2.9), Theorem 3.2]{DS}. 
Let $d_n$ be the ($K$-)dimension of this space. Thus, $\Ext_{\w U_\zeta}^n(\w M,\w N)=\sco^{\oplus d_n}\oplus T_n$, where $T_n$ is a torsion $\sco$-module. 
So the sequence \eqref{les} is of the form
\begin{align}\begin{split}\
\cdots&\to\sco^{\oplus d_n}\oplus T_n\xrightarrow{\pi}\sco^{\oplus d_n}\oplus T_n\to\Ext^n_{\w U_\zeta}(\w M, \w N_k)\to\\
&\to\sco^{\oplus d_{n+1}}\oplus T_{n+1}\xrightarrow{\pi}\sco^{\oplus d_{n+1}}\oplus T_{n+1}\to\cdots.
\end{split}
\end{align}
Since $\sco/\pi\sco\cong k$, we have 
$$\Ext^n_{\w U_\zeta}(\w M, \w N_k)\cong k^{\oplus d_n}\oplus T_n/\pi T_n\oplus \ker (T_{n+1}\xrightarrow{\pi}T_{n+1}).$$
The proof is complete using \cite[(2.9), Theorem 3.2]{DS}, which says
$$\Ext^n_{\w U_\zeta}(\w M,\w N_k)\cong\Ext^n_G(\w M_k,\w N_k).$$ 
\end{proof}

The following is an immediate consequence.
\begin{prop}\label{correducing}
Let $\gamma,\gamma'\in X^+$. We have
\begin{enumerate}
\item $\dim_k\Ext^n_G(\Delta(\gamma),\Delta(\gamma'))\geq\dim_K\Ext^n_{U_\zeta}(\Delta_\zeta(\gamma),\Delta_\zeta(\gamma'))$;
\item $\dim_k\Ext^n_G(\Delta(\gamma),\drr(\gamma'))\geq\dim_K\Ext^n_{U_\zeta}(\Delta_\zeta(\gamma),L_\zeta(\gamma'))$;
\item $\dim_k\Ext^n_G(\Delta(\gamma),\nrr(\gamma'))\geq\dim_K\Ext^n_{U_\zeta}(\Delta_\zeta(\gamma),L_\zeta(\gamma'))$;
\item $\dim_k\Ext^n_G(\drr(\gamma),\nrr(\gamma'))\geq\dim_K\Ext^n_{U_\zeta}(L_\zeta(\gamma),L_\zeta(\gamma'))$;
\item $\dim_k\Ext^n_G(\drr(\gamma),\drr(\gamma'))\geq\dim_K\Ext^n_{U_\zeta}(L_\zeta(\gamma),L_\zeta(\gamma'))$;
\end{enumerate}
and similar inequalities for the dual modules (replace ``$\Delta$'' by ``$\nabla$''). 
\end{prop}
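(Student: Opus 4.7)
The proof is a direct application of Proposition \ref{reducingext}, with the work being entirely concentrated in choosing the right admissible lattices for the pairs $(M,N)$ in $\umd$ so that their mod-$p$ reductions match the modules appearing on the $G$-side. All the necessary lattices have already been introduced in \S\ref{reducingmodules}: the minimal admissible lattice $\w\Delta_\zeta(\gamma)$ in $\Delta_\zeta(\gamma)$ with reduction $\w\Delta_\zeta(\gamma)_k\cong\Delta(\gamma)$, its dual $\w\nabla_\zeta(\gamma)$ in $\nabla_\zeta(\gamma)$ with reduction $\w\nabla_\zeta(\gamma)_k\cong\nabla(\gamma)$, the minimal admissible lattice $\w L_\zeta^{\mathrm{min}}(\gamma)$ in $L_\zeta(\gamma)$ with reduction $\drr(\gamma)$, and its dual $\w L_\zeta^{\mathrm{max}}(\gamma)$ in $L_\zeta(\gamma)$ with reduction $\nrr(\gamma)$.

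The plan is then to run through the five items in turn. For (1), apply Proposition \ref{reducingext} with $\w M=\w\Delta_\zeta(\gamma)$ and $\w N=\w\Delta_\zeta(\gamma')$. For (2), take $\w M=\w\Delta_\zeta(\gamma)$ and $\w N=\w L_\zeta^{\mathrm{min}}(\gamma')$. For (3), take $\w M=\w\Delta_\zeta(\gamma)$ and $\w N=\w L_\zeta^{\mathrm{max}}(\gamma')$. For (4), take $\w M=\w L_\zeta^{\mathrm{min}}(\gamma)$ and $\w N=\w L_\zeta^{\mathrm{max}}(\gamma')$. For (5), take $\w M=\w L_\zeta^{\mathrm{min}}(\gamma)$ and $\w N=\w L_\zeta^{\mathrm{min}}(\gamma')$. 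In every case the two reductions match exactly the $G$-modules occurring in the left-hand side of the claimed inequality, and the $U_\zeta$-modules $\w M_K, \w N_K$ are precisely those on the right-hand side, so Proposition \ref{reducingext} delivers the inequality directly.

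For the parenthetical ``dual'' statement obtained by replacing each $\Delta$ by $\nabla$, one simply swaps the roles of minimal and dual admissible lattices on the relevant sides: e.g.\ to get the analogue of (1) for costandards, use $\w M=\w\nabla_\zeta(\gamma)$ and $\w N=\w\nabla_\zeta(\gamma')$; for the analogue of (2), pair $\w\nabla_\zeta(\gamma)$ with $\w L_\zeta^{\mathrm{max}}(\gamma')$ so that the reductions are $\nabla(\gamma)$ and $\nrr(\gamma')$; and so on. No separate argument is required.

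There is essentially no obstacle here: the content was already packed into Proposition \ref{reducingext}, and what remains is bookkeeping of which lattice's mod-$p$ reduction yields which $G$-module. The only potential pitfall is ensuring that a given $U_\zeta$-module actually admits an admissible lattice with the prescribed behavior under $-\otimes_\sco k$, but for the standard, costandard, and simple modules this is exactly what was verified in \S\ref{reducingmodules} (existence of $\w\Delta_\zeta(\gamma)$, $\w\nabla_\zeta(\gamma)$, $\w L_\zeta^{\mathrm{min}}(\gamma)$, $\w L_\zeta^{\mathrm{max}}(\gamma)$ with the right reductions).
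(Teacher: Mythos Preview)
Your proof is correct and is exactly the paper's approach: the paper simply records the proposition as ``an immediate consequence'' of Proposition~\ref{reducingext}, and your write-up merely unpacks that by specifying the appropriate admissible lattice for each case. The choices you list are the right ones.
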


The right hand sides of Proposition \ref{correducing} (2)-(5) are known in most cases by the following result. 

\begin{prop}\cite[Theorem 4.10, Theorem 4.14]{mine}\cite[Conjecture III]{PS14}
Suppose $p^r$ is KL-good for the root system $R$ (see page~\pageref{klgood}). Let $\mu\in \overline{^{p^r}\!C^-_\Z}$, $J=\{s\in S_{p^r}\ |\ s.\mu=\mu\}$, and $y,w\in W^+(\mu)$. 
We have $$\sum_{n=0}^\infty \dim \Ext_{U_\zeta}^n(\Delta_\zeta(y.\mu),L_\zeta( w.\mu))t^n=t^{l( w)-l( y)}P^{J}_{y,w}(t\inv)$$ 
and 
$$\sum_{n=0}^\infty \dim \Ext_{U_\zeta}^n(L_\zeta( y.\mu),L_\zeta( w.\mu))t^n=
\sum_{ z\in W^+(\mu)}t^{l( y)+l( w)-2l( z)}P^{J}_{ z, y}(t\inv)P^{J}_{ z, w}(t\inv)$$
where the polynomial $P^J_{y,w}$ is as in \eqref{paraklpoly}.
\end{prop}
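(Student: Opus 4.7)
The plan is to reduce the problem to known $\Ext$ computations in affine category $\mathcal{O}$ for $\widehat{\mathfrak{g}}$ via the Kazhdan--Lusztig equivalence, whose availability is precisely what the KL-good assumption on $p^r$ provides (cf.\ \cite[\S7]{Tani}, \cite[\S2.2]{mine}). Since this is an equivalence of highest weight categories that sends standard objects to Verma modules and simples to simples, it induces isomorphisms on $\Ext^\bullet$ between objects in corresponding blocks, so the question becomes a purely affine-Lie-algebraic one.

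First I would handle the regular case, i.e.\ when $\mu\in{^{p^r}\!C^-_\Z}$, so that $J=\emptyset$ and $P^J_{y,w}=P_{y,w}$. There the first formula
\[\sum_{n\geq 0}\dim\Ext^n(M(y.\mu),L(w.\mu))\,t^n = t^{l(w)-l(y)}P_{y,w}(t\inv)\]
is (the graded Ext-refinement of) the Kazhdan--Lusztig conjecture for affine Verma modules, established by Kashiwara--Tanisaki and Casian. For the second formula, I would take a BGG-type (or tilting) resolution of $L(y.\mu)$ whose graded pieces realise the expansion of the KL basis element in terms of Vermas, apply $\Hom(-,L(w.\mu))$, and sum term-by-term using the first formula; the intermediate index $z\in W^+(\mu)$ and the product $P_{z,y}(t\inv)P_{z,w}(t\inv)$ then appear as bilinear combinations of KL-coefficients coming from the resolution.

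To pass from the regular case to the general $\mu$, I would use translation functors. Pick a regular $\lambda$ whose upper closure contains $\mu$, and consider the translation $T^\mu_\lambda$ and its adjoint $T^\lambda_\mu$: these are exact, take standards to standards, and on simples collapse the fibres $yW_J$ to their minimal-length representatives in $W^I$. On the combinatorial side this is precisely the operation that converts the regular polynomial $P_{y,w}$ into the parabolic $P^J_{y,w}=\sum_{x\in W_J}(-1)^{l(x)}P_{yx,w}$ of~\eqref{paraklpoly}. Combining this with the adjunction $\Ext^n(T^\mu_\lambda M,N)\cong\Ext^n(M,T^\lambda_\mu N)$ converts the regular formulas into the stated singular ones.

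The main obstacle will be making the translation-functor step rigorous in the quantum setting: one must verify that translation and wall-crossing for $\umd$ transport faithfully across the Kazhdan--Lusztig equivalence, and that the resulting bookkeeping really produces the parabolic alternating sum $P^J_{y,w}$. The singular case of the Ext-between-simples formula in particular requires tracking the effect of $T^\lambda_\mu$ on both arguments of the BGG-type resolution and reassembling the double sum $\sum_{z\in W^+(\mu)}t^{l(y)+l(w)-2l(z)}P^J_{z,y}(t\inv)P^J_{z,w}(t\inv)$; this combinatorial reassembly, together with checking that the Ext-preserving features of the equivalence survive at the singular weights, is where the genuine work lies, once the regular affine KL statement is taken as input.
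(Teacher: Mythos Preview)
The paper does not prove this proposition at all: it is stated as a citation of \cite[Theorem 4.10, Theorem 4.14]{mine} and \cite[Conjecture III]{PS14}, with no argument given in the present paper. So there is nothing in the paper against which to compare your attempt.

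That said, your outline is broadly the shape of the argument actually carried out in \cite{mine}: the KL-good hypothesis is invoked precisely to transport the question to affine category~$\mathcal{O}$ via the Kazhdan--Lusztig equivalence, the regular case is the (known) Ext-refinement of the affine Kazhdan--Lusztig conjecture, and the singular case is deduced by translation, producing the parabolic polynomials $P^J_{y,w}$. Your identification of the delicate point---showing that translation functors on the quantum side behave compatibly with the equivalence and that the alternating-sum combinatorics of \eqref{paraklpoly} emerges cleanly in the singular block---is accurate; this is exactly where the work in \cite{mine} is concentrated. One refinement: rather than a ``BGG-type resolution'' for the $\Ext$-between-simples formula, the standard route is to use the Koszul/graded structure (or equivalently the abstract Kazhdan--Lusztig theory of \cite{cps1}) on the regular block, which yields the bilinear expression $\sum_z P_{z,y}P_{z,w}$ directly from the inverse KL matrix, and then to push this through translation; an explicit BGG resolution of $L(y.\mu)$ by Vermas does not generally exist in these categories.
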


The ``$r\geq 1$''-analogues of \cite[Conjecture II]{PS14} would say that the ``$\geq$'' are ``$=$'' for Proposition \ref{correducing} (3), (4). 
We give some examples in \S\ref{a1exs} below where this is a strict inequality for $r>1$. 

Unlike in the other inequalities in Proposition \ref{correducing}, the right hand side in Proposition \ref{correducing} (1) is not known in general. 
(A result on $\Ext$ between two (co)standard modules in special cases can be found in \cite{aparkergoofydim}.)
Another difference between this case and the rest is that the left hand side in Proposition \ref{correducing} (1) does not depend on $r$. 
Considering all the cases $r\geq 1$ together, we obtain
\begin{align}\begin{split}\label{eqdeldel}
\dim_k\Ext^n_G(\Delta(\gamma),\Delta(\gamma'))
\geq\max\{\dim_K\Ext^n_{U_\zeta}(\Delta_\zeta(\gamma),\Delta_\zeta(\gamma'))\ |\ \zeta^{p^r}=1, r\geq 1\}
\end{split}\end{align}

We explore some special cases where we can say something about the dimensions of $\Ext^n_{U_\zeta}(\Delta_\zeta(\gamma),\Delta_\zeta(\gamma'))$ for the rest of this subsection.

It will be convenient to employ the following convention when writing weights. Recall that we identify the weight lattices for $G$ and for $U_\zeta$, for any root of unity $\zeta$. Now write a $G$-weight $\gamma$ as $\gamma=w.\lambda$ where $\lambda\in\overline{{^{p^r}\!C}^-}\cap X$ and $w\in W^+_{p^r}(\lambda)\subset W_{p^r}\subset W_p$. The following two corollaries intersect largely with Franklin's result \cite{Franmaps}.

\begin{cor}\label{franklin}
Let $r\geq 1$ be such that $p^r\geq h$. Let $\mu\in\overline{{^{p^r}\!C}^-}\cap X$, $w\in W_{p^r}^+\subset W_p$ and $s\in S_{p^r}\subset W_p$ (So $s$ may not be in $S_p$). If $ws>w$, then $$\Hom_G(\Delta(w.\mu),\Delta(ws.\mu))\neq 0.$$
In other words, there is a nonzero map $$\Delta(\gamma)\to \Delta(\gamma')$$ if $\gamma'>\gamma\in X^+$ and $\gamma'$ is the reflection image of $\gamma$ through a wall of the $p^r$-facet containing $\gamma$. 
\end{cor}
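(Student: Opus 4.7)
The plan is to invoke Proposition~\ref{correducing}(1) with $n=0$, which gives
\[
\dim_k\Hom_G(\Delta(w.\mu),\Delta(ws.\mu))\geq\dim_K\Hom_{U_\zeta}(\Delta_\zeta(w.\mu),\Delta_\zeta(ws.\mu))
\]
for a primitive $p^r$-th root of unity $\zeta$. Thus the task reduces to exhibiting a nonzero $U_\zeta$-homomorphism $\Delta_\zeta(w.\mu)\to\Delta_\zeta(ws.\mu)$.

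For this quantum $\Hom$, I would run a translation-functor argument. The hypothesis $p^r\geq h$ guarantees enough room in $\overline{^{p^r}\!C^-}\cap X$ to choose a weight $\nu$ lying on the wall of $^{p^r}\!C^-$ corresponding to the simple reflection $s$, so that $w.\nu=ws.\nu$. The standard formalism of quantum translation functors (paralleling \cite[II.7]{J} and developed in \cite{APW}) then yields $T_\mu^\nu\Delta_\zeta(w.\mu)\cong\Delta_\zeta(w.\nu)$ together with a short exact sequence
\[
0\to\Delta_\zeta(ws.\mu)\to T_\nu^\mu\Delta_\zeta(w.\nu)\to\Delta_\zeta(w.\mu)\to 0,
\]
with $\Delta_\zeta(ws.\mu)$ appearing as the submodule since $ws.\mu>w.\mu$. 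Adjointness gives
\[
\Hom_{U_\zeta}(\Delta_\zeta(w.\mu),\,T_\nu^\mu\Delta_\zeta(w.\nu))\cong\End_{U_\zeta}(\Delta_\zeta(w.\nu))\cong K,
\]
so there is a nonzero map $f\colon\Delta_\zeta(w.\mu)\to T_\nu^\mu\Delta_\zeta(w.\nu)$. If the composition of $f$ with the projection onto $\Delta_\zeta(w.\mu)$ were nonzero it would be a scalar multiple of the identity, splitting the sequence and contradicting the indecomposability of $T_\nu^\mu\Delta_\zeta(w.\nu)$. Hence $f$ factors through the submodule $\Delta_\zeta(ws.\mu)$, producing the desired nonzero $U_\zeta$-homomorphism.

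The main obstacle is verifying the two quantum translation-functor inputs used above, namely the explicit $\Delta_\zeta$-filtration of $T_\nu^\mu\Delta_\zeta(w.\nu)$ and its indecomposability. These are classical in the algebraic-group setting \cite[II.7]{J} and should transfer to $\umd$ through the APW machinery, but each requires careful bookkeeping with the $p^r$-alcove geometry rather than the $p$-alcove one. An alternative route available under the same hypothesis $p^r\geq h$ (making $p^r$ KL-good) is to invoke the Kazhdan--Lusztig equivalence directly: it identifies $\Delta_\zeta$-modules with Verma modules in the relevant affine category $\mathcal{O}$, after which the classical BGG fact that Verma modules with highest weights related by a Bruhat-covering admit nonzero embeddings supplies the required $\Hom$.
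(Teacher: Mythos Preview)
Your overall plan coincides with the paper's: reduce to the quantum case via Proposition~\ref{correducing}(1) and then prove $\Hom_{U_\zeta}(\Delta_\zeta(w.\mu),\Delta_\zeta(ws.\mu))\neq 0$ by translation functors. For \emph{regular} $\mu$ your argument is precisely the one in \cite[II.7.19]{J}, which the paper simply cites; the indecomposability of $T_\nu^\mu\Delta_\zeta(w.\nu)$ that you flag as an obstacle is part of that standard package.

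The gap is the singular case. Your two translation inputs, namely $T_\mu^\nu\Delta_\zeta(w.\mu)\cong\Delta_\zeta(w.\nu)$ and the two-step extension
\[
0\to\Delta_\zeta(ws.\mu)\to T_\nu^\mu\Delta_\zeta(w.\nu)\to\Delta_\zeta(w.\mu)\to 0,
\]
presuppose $\mu$ regular and $\nu$ subregular on exactly the $s$-wall. If $\mu$ already lies on walls $J\neq\emptyset$ (with $s\notin J$, else the claim is vacuous), one must take $\nu$ on walls containing $J\cup\{s\}$ for the first isomorphism to hold, and then $T_\nu^\mu\Delta_\zeta(w.\nu)$ has a $\Delta$-filtration indexed by the minimal coset representatives of $W_{J\cup\{s\}}/W_J$, which in general has more than two terms; your splitting/indecomposability dichotomy no longer isolates $\Delta_\zeta(ws.\mu)$. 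The paper handles singular $\mu$ by a separate reduction: choose a regular $\lambda\in{^{p^r}\!C^-}\cap X$, use $T^\mu_\lambda\Delta_\zeta(-)\cong\Delta_\zeta(-)$ and adjunction to rewrite the target as $\Hom_{U_\zeta}(T^\lambda_\mu T^\mu_\lambda\Delta_\zeta(w.\lambda),\Delta_\zeta(ws.\lambda))$, and then embed the already-established regular Hom via the surjection $T^\lambda_\mu T^\mu_\lambda\Delta_\zeta(w.\lambda)\surj\Delta_\zeta(w.\lambda)$. As a side remark, $p^r\geq h$ is not known to force KL-goodness; the paper invokes the Kazhdan--Lusztig route only in a later remark to \emph{replace} the hypothesis $p^r\geq h$, not as the proof itself.
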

\begin{proof}
First consider the case where $\mu\in {{^{p^r}\!C}^-}\cap X$ is regular.
The condition on $p^r$ ensures the existence of (a regular and) a subregular weight for $U_\zeta$ with $\zeta$ a primitive $p^r$-th root of unity. (See \cite[II.6.3]{J}.)
We can, thus, apply a translation argument \cite[II.7.19]{J} to obtain
\begin{equation*}\label{quanfr}
\Hom_{U_\zeta}(\Delta_\zeta(w.\mu),\Delta_\zeta(ws.\mu))\cong K
\end{equation*}
 in the quantum case.
The corollary follows from Proposition \ref{correducing} (1).

Now we treat the general weight $\mu\in\overline{{^{p^r}\!C}^-}\cap X$. 
Again, by Proposition \ref{correducing} (1), it is enough to obtain
$$\Hom_{U_\zeta}(\Delta_\zeta(w.\mu),\Delta_\zeta(ws.\mu))\neq 0.$$
Pick a regular weight $\lambda\in {{^{p^r}\!C}^-}\cap X$ (possible since $p^r\geq h$) and consider the translation functor $\tothe$ in $U_\zeta$-mod.
We may assume that $w\in W^J$.
We have 
\begin{align*}
\Hom_{U_\zeta}(\Delta_\zeta(w.\mu),\Delta_\zeta(ws.\mu))&\cong\Hom_{U_\zeta}(\tothe\Delta_\zeta(w.\lambda),\tothe\Delta_\zeta(ws.\lambda))\\
&\cong\Hom_{U_\zeta}(\fromthe\tothe\Delta_\zeta(w.\lambda),\Delta_\zeta(ws.\lambda))
\end{align*}
But the surjection $$\fromthe\tothe\Delta_\zeta(w.\lambda)\surj \Delta_\zeta(w.\lambda)$$
induces an inclusion $$\Hom_{U_\zeta}(\Delta_\zeta(w.\lambda),\Delta_\zeta(ws.\lambda))\inj\Hom_{U_\zeta}(\fromthe\tothe\Delta_\zeta(w.\lambda),\Delta_\zeta(ws.\lambda)).$$ 
The left hand side is nonzero by the regular case done in the first paragraph.
\end{proof}

\begin{rem}
Since $\hd\Delta(\gamma)\cong L(\gamma)$, if there is a nonzero map from
$\Delta(\gamma)$ to $\Delta(\gamma')$ then $L(\gamma)$ is a composition factor of $\Delta(\gamma')$. Thus, Corollary \ref{franklin} implies Corollary \ref{jscor}.
The same remark applies to Corollary \ref{AKmaps} below.
\end{rem}

In fact, we know many more morphisms between standard modules in the quantum case, from which we can reduce mod $p$.
\begin{cor}\label{AKmaps}
Let $r\geq 1$ be such that $p^r\geq h$.
Let $\lambda\in\overline{{^{p^r}\!C}^-}\cap X$, $s\in S_{p^r}\setminus I$, where $I=\{s\in S_{p^r}\ |\ s.\lambda=\lambda\}$
For $w\in W_{p^r}^{(S_{p^r}\setminus\{s\})}\cap W_{p^r}^+$ and $x<y\in W_{(S_{p^r}\setminus\{s\})}(=(W_{p^r})_{(S_{p^r}\setminus\{s\})})$, we have
$$\Hom_G(\Delta(wx.\lambda),\Delta(wy.\lambda))\neq 0.$$
\end{cor}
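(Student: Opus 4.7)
The plan is to reduce, as in Corollary \ref{franklin}, to the quantum case via Proposition \ref{correducing}(1) and then treat regular and singular $\lambda$ separately. It suffices to show
\[
\Hom_{U_\zeta}(\Delta_\zeta(wx.\lambda),\Delta_\zeta(wy.\lambda))\neq 0.
\]

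First assume $\lambda\in {^{p^r}\!C^-}\cap X$ is regular. Since $w$ is a shortest coset representative modulo $W_J$ (where $J:=S_{p^r}\setminus\{s\}$), lengths are additive, $l(wz)=l(w)+l(z)$ for $z\in W_J$, so $x<y$ in $W_J$ lifts to $wx<wy$ in the Bruhat order of $W_{p^r}$. I would then invoke (or derive from translation arguments) the Andersen--Kashiwara embedding theorem for $U_\zeta$: for $u\leq u'$ in the Bruhat order of $W_{p^r}$ with $u.\lambda,u'.\lambda\in X^+$ and $\lambda$ regular, there is a nonzero map $\Delta_\zeta(u.\lambda)\to\Delta_\zeta(u'.\lambda)$. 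Applied to $u=wx$, $u'=wy$, this produces the desired element of $\Hom_{U_\zeta}(\Delta_\zeta(wx.\lambda),\Delta_\zeta(wy.\lambda))$ in the regular case.

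For general (singular) $\lambda\in\overline{{^{p^r}\!C}^-}\cap X$, I would imitate the singular-case paragraph of Corollary \ref{franklin}'s proof. Replace $wx,wy$ by their shortest representatives in $W^I$ (for $I=\{t\in S_{p^r}\mid t.\lambda=\lambda\}$), which does not change the weights $wx.\lambda,wy.\lambda$; then choose a regular $\lambda'\in {^{p^r}\!C^-}\cap X$ (available thanks to $p^r\geq h$) and apply the translation functors $T^\lambda_{\lambda'}$ and its adjoint $T_\lambda^{\lambda'}$. The standard identifications
\[
\Hom_{U_\zeta}\!\bigl(\Delta_\zeta(wx.\lambda),\Delta_\zeta(wy.\lambda)\bigr)
\cong\Hom_{U_\zeta}\!\bigl(T^\lambda_{\lambda'}\Delta_\zeta(wx.\lambda'),T^\lambda_{\lambda'}\Delta_\zeta(wy.\lambda')\bigr)
\cong\Hom_{U_\zeta}\!\bigl(T_\lambda^{\lambda'}T^\lambda_{\lambda'}\Delta_\zeta(wx.\lambda'),\Delta_\zeta(wy.\lambda')\bigr)
\]
combined with the counit surjection $T_\lambda^{\lambda'}T^\lambda_{\lambda'}\Delta_\zeta(wx.\lambda')\twoheadrightarrow\Delta_\zeta(wx.\lambda')$ give an inclusion
\[
\Hom_{U_\zeta}\!\bigl(\Delta_\zeta(wx.\lambda'),\Delta_\zeta(wy.\lambda')\bigr)\hookrightarrow\Hom_{U_\zeta}\!\bigl(\Delta_\zeta(wx.\lambda),\Delta_\zeta(wy.\lambda)\bigr),
\]
whose left side is nonzero by the regular case.

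The principal obstacle is the Andersen--Kashiwara embedding for $U_\zeta$-Weyl modules used in the regular case. The Bruhat relation $wx\leq wy$ does not in general decompose into a chain of simple-reflection covers in $W_{p^r}$, so a naive iteration of Corollary \ref{franklin} is insufficient. To close this gap I expect to use translation through a wall $\mu\in\overline{{^{p^r}\!C}^-}$ with $\mathrm{Stab}(\mu)=W_J$ (whose existence on the weight lattice is guaranteed by $p^r\geq h$): the module $T_\mu^\lambda\Delta_\zeta(w.\mu)$ carries a $\Delta$-filtration with factors $\{\Delta_\zeta(wv.\lambda)\}_{v\in W_J}$, and the needed nonzero map should be extracted by analyzing this filtration together with the biadjointness of the translation functors, in the spirit of the wall-crossing short exact sequence used to prove Corollary \ref{franklin}.
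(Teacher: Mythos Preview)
Your strategy matches the paper's: reduce to $U_\zeta$ via Proposition~\ref{correducing}(1), handle the regular quantum case, then the singular case by the translation argument of Corollary~\ref{franklin}. The paper does not argue the regular case from scratch; it cites \cite[Remark~3.6, Proposition~3.7]{AK} (Andersen--\emph{Kaneda}, not Andersen--Kashiwara), remarking only that the map is a composition of Corollary~\ref{franklin}-type maps and that translation functors are what show this composite is nonzero.

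Two refinements. First, the general statement you initially invoke---that $u\le u'$ in the Bruhat order of $W_{p^r}$ forces $\Hom_{U_\zeta}(\Delta_\zeta(u.\lambda),\Delta_\zeta(u'.\lambda))\neq 0$---is \emph{not} known in this generality for affine Weyl groups; what \cite{AK} establishes is precisely the parabolic-coset statement, so your last paragraph, rather than being a fallback, is the actual content needed. Your diagnosis there is correct: the key input is translation through a weight $\mu$ with $\mathrm{Stab}(\mu)=W_J$ (for $J=S_{p^r}\setminus\{s\}$), whose $\Delta$-filtration on $T^\lambda_\mu\Delta_\zeta(w.\mu)$ with sections $\{\Delta_\zeta(wv.\lambda)\}_{v\in W_J}$ supplies the nonvanishing. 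Second, be cautious with the claim that such a $\mu$ lies in $X$ whenever $p^r\ge h$: the relevant alcove vertex is integral only when the corresponding mark of the affine diagram divides $p^r$, so this may require a small additional argument or hypothesis (the paper inherits the same issue through its citation of \cite{AK}).
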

\begin{proof}
A nonzero morphism in the quantum case 
$$\Delta_\zeta(wx.\lambda)\to\Delta_\zeta(wy.\lambda)$$
when $\lambda$ is regular is well known or explained in \cite[Remark 3.6, Proposition 3.7]{AK}.
It is a composition of maps obtained in Corollary \ref{franklin} and also uses translation functors in showing that it is nonzero.
The proof of Corollary \ref{franklin} gives the singular case. 
Finally, use Proposition \ref{correducing}.
\end{proof}


A very similar proof gives the next corollary.

\begin{cor}\label{multcor}
In the situation of Corollary \ref{franklin}, we have $$\Ext^1_G(\Delta(w.\mu),\Delta(ws.\mu))\neq 0.$$
\end{cor}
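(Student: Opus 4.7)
The plan is to run the proof of Corollary \ref{franklin} verbatim with $\Hom$ upgraded to $\Ext^1$. By Proposition \ref{correducing}(1), it suffices to produce a nonzero class in $\Ext^1_{U_\zeta}(\Delta_\zeta(w.\mu),\Delta_\zeta(ws.\mu))$, which I take to be the extension class of an appropriate translated Weyl module in $\umd$.

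Assume first that $\mu\in{^{p^r}\!C^-}\cap X$ is regular. Since $p^r\geq h$, choose $\lambda\in\overline{{^{p^r}\!C}^-}\cap X$ with $s.\lambda=\lambda$. Standard quantum translation theory (the $\umd$ analog of \cite[II.7.19]{J}) produces a short exact sequence
$$0\longrightarrow\Delta_\zeta(ws.\mu)\longrightarrow T^\mu_\lambda\Delta_\zeta(w.\lambda)\longrightarrow\Delta_\zeta(w.\mu)\longrightarrow 0.$$
The biadjointness of $T^\mu_\lambda$ and $T^\lambda_\mu$, together with $T^\lambda_\mu\Delta_\zeta(w.\mu)\cong\Delta_\zeta(w.\lambda)$ (translating off a regular weight back onto a wall), gives
$$\dim_K\Hom_{U_\zeta}(\Delta_\zeta(w.\mu),T^\mu_\lambda\Delta_\zeta(w.\lambda))=\dim_K\End_{U_\zeta}(\Delta_\zeta(w.\lambda))=1.$$
If the sequence split, the resulting decomposition $T^\mu_\lambda\Delta_\zeta(w.\lambda)\cong\Delta_\zeta(ws.\mu)\oplus\Delta_\zeta(w.\mu)$ would force this Hom-space to have dimension at least $1+\dim_K\Hom_{U_\zeta}(\Delta_\zeta(w.\mu),\Delta_\zeta(ws.\mu))\geq 2$, the extra summand being nonzero by the quantum-case input to Corollary \ref{franklin}; this contradicts the above calculation. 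Hence the extension class is a nonzero element of $\Ext^1_{U_\zeta}(\Delta_\zeta(w.\mu),\Delta_\zeta(ws.\mu))$.

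For singular $\mu$, I repeat the translation maneuver from the end of the proof of Corollary \ref{franklin}: write $w\in W^J$ for $J=\{s'\in S_{p^r}\mid s'.\mu=\mu\}$ and choose a regular $\lambda'\in{^{p^r}\!C^-}\cap X$. Biadjointness of $T^\mu_{\lambda'}$ and $T^{\lambda'}_\mu$ rewrites the target as $\Ext^1_{U_\zeta}(T^{\lambda'}_\mu T^\mu_{\lambda'}\Delta_\zeta(w.\lambda'),\Delta_\zeta(ws.\lambda'))$, and the counit surjection $T^{\lambda'}_\mu T^\mu_{\lambda'}\Delta_\zeta(w.\lambda')\twoheadrightarrow\Delta_\zeta(w.\lambda')$ then compares this, via its long exact $\Ext$-sequence, with $\Ext^1_{U_\zeta}(\Delta_\zeta(w.\lambda'),\Delta_\zeta(ws.\lambda'))$, which is nonzero by the regular case.

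The main obstacle will be in this singular step: the transport of the regular class goes through the map $\Ext^1_{U_\zeta}(\Delta_\zeta(w.\lambda'),\Delta_\zeta(ws.\lambda'))\to\Ext^1_{U_\zeta}(T^{\lambda'}_\mu T^\mu_{\lambda'}\Delta_\zeta(w.\lambda'),\Delta_\zeta(ws.\lambda'))$, which is not automatically injective. One must verify that the preceding connecting map from $\Hom_{U_\zeta}(N,\Delta_\zeta(ws.\lambda'))$ does not kill the nonzero class just produced, where $N$ denotes the kernel of the counit; this calls for a weight-filtration analysis of $N$ parallel to the implicit Hom-argument used in Corollary \ref{franklin}.
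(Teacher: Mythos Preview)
Your overall architecture matches the paper's exactly: reduce to $\umd$ via Proposition~\ref{correducing}(1), handle the regular case by the non-splitting of the wall-crossing extension (this is precisely the argument of \cite[II.7.19]{J} that the paper cites), and then pass to singular $\mu$ by adjointness with a regular $\lambda'$.

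The singular step, however, is left genuinely incomplete. You correctly isolate the obstruction --- one must show that the connecting map from $\Hom_{U_\zeta}(N,\Delta_\zeta(ws.\lambda'))$ does not absorb the nonzero regular class --- but you do not carry out the verification. Moreover, your appeal to ``the implicit Hom-argument used in Corollary~\ref{franklin}'' is misplaced: in that proof no analysis of the kernel was necessary, because $\Hom$ is left exact and the surjection $\fromthe\tothe\Delta_\zeta(w.\lambda)\surj\Delta_\zeta(w.\lambda)$ immediately yielded an injection on $\Hom$. That free injectivity is exactly what fails at the $\Ext^1$ level, so a new argument is required here.

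The paper supplies it: the kernel $N$ (their $M$) has a $\Delta$-filtration with sections $\Delta_\zeta(wx.\lambda')$ for $x\in W_J\setminus\{e\}$, and its head consists of irreducibles $L_\zeta(wt.\lambda')$ with $t\in J$. Since $s\notin J$ and $w\in W^J$, one has $wt\not<ws$ for every $t\in J$, so no such irreducible can map nontrivially into $\Delta_\zeta(ws.\lambda')$. Hence $\Hom_{U_\zeta}(N,\Delta_\zeta(ws.\lambda'))=0$ outright, the map
\[
\Ext^1_{U_\zeta}(\Delta_\zeta(w.\lambda'),\Delta_\zeta(ws.\lambda'))\longrightarrow\Ext^1_{U_\zeta}(\fromthe\tothe\Delta_\zeta(w.\lambda'),\Delta_\zeta(ws.\lambda'))
\]
is injective, and the conclusion follows.
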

\begin{proof}
By Proposition \ref{correducing} (1), it is enough to show that $\Ext_{ U_\zeta}^1(\Delta_\zeta(w.\mu),\Delta_\zeta(ws.\mu))\neq 0.$

The regular case is done by the argument in \cite[II.7.19]{J}. For the singular case, take a regular weight $\lambda$ in ${^{p^r}\!C}^-$ and consider the translation functor $\tothe$ of $U_\zeta$-modules.
Then,
\begin{align*}
\Ext^1_{U_\zeta}(\Delta_\zeta(w.\mu),\Delta_\zeta(ws.\mu))
&\cong\Ext^1_{U_\zeta}(\fromthe\tothe\Delta_\zeta(w.\lambda),\Delta_\zeta(ws.\lambda))
\end{align*}
Assuming $w\in W^J$, there is a short exact sequence
$$0\to M\to\fromthe\tothe\Delta_\zeta(w.\lambda)\to\Delta_\zeta(w.\lambda)\to 0,$$
where $M$ has a $\Delta$-filtration with sections $\Delta_\zeta(wx.\lambda)$, $x\in W^J\setminus\{e\}$. 
Taking $\Hom_{U_\zeta}(-,\Delta_\zeta(ws.\lambda))$, we have (a part of) a long exact sequence
\begin{align}\begin{split}\label{Mles}
\to\Hom_{U_\zeta}(M,\Delta_\zeta(ws.\lambda))
\to\Ext_{U_\zeta}^1(\Delta_\zeta(w.\lambda),\Delta_\zeta(ws.\lambda))\to\Ext_{U_\zeta}^1(\fromthe\tothe\Delta_\zeta(w.\lambda),\Delta_\zeta(ws.\lambda))\to
.\end{split}\end{align}
Since the head of $M$ is a direct sum of irreducibles of highest weight $wt$ with $t\in J$, and since $wt\not<ws$ for $t\neq s \in S_{p^r}$, the first term $\Hom_{U_\zeta}(M,\Delta_\zeta(ws.\lambda))$ in \eqref{Mles} is zero.
The second term $\Ext_{U_\zeta}^1(\Delta_\zeta(w.\lambda),\Delta_\zeta(ws.\lambda))$ in \eqref{Mles} is nonzero by the regular case considered above.
Therefore $\Ext_{U_\zeta}^1(\fromthe\tothe\Delta_\zeta(w.\lambda),\Delta_\zeta(ws.\lambda))$ is also not zero, which proves the claim.
\end{proof}

\begin{rem}
The condition $p^r\geq h$ in Corollary \ref{franklin}, \ref{AKmaps}, \ref{multcor} can be replaced by the KL-good condition by transferring the assertions in $\umd$ to the affine case via the Kazhdan-Lusztig correspondence, using the combinatorics of Fiebig \cite{Fiecomb} to move the level (which is proportional to the root order), and then transferring the assertion back to the quantum case where the assertion is proved. 
\end{rem}

\section{Type $A_1$ examples}\label{a1exs}
Let $G=SL_2$. This section provides some examples of the $r=2$ case which shows that many nice results for the $r=1$ case do not generalize to $r>1$. We list the $r\geq 1$ versions of the ``nice results'' that are disproved in this section. 

\begin{enumerate}
\item If $p\gg 0$ and $\lambda\in {^pC^-_\Z}$, then $\Delta(w.\lambda)$ for each $w\in W^+_p$ has a filtration with sections of the form $\drr(\gamma)$. (The $r=1$ case is proved in \cite{PS13}.)

\item  If $p\gg 0$ and $\lambda\in {^pC^-_\Z}$, 
then we have 
\begin{equation}\label{2con2}
\Ext^n_G(\drr(w.\lambda),\nabla(y.\lambda))=0\ \text{  if } l(w)- l(y)\not\equiv n \text{ mod}\ 2.
\end{equation}
(The $r=1$ case is proved in \cite{CPS7}.
)
\end{enumerate}

We take $p=3$ to have concrete numbers, 
but all the examples here work for a larger $p$. Note that the condition ``$p\gg 0$'' means, by our convention, ``the Lusztig conjecture for $G$ is true and $p\geq 2h-2$''. Thus, $3\gg 0$ for $SL_2$.

In this case, the dominant weights are identified with the integers $n\in\Z_{\geq 0}$.
The Jantzen region in this notation is defined by the condition $n\leq 8$. 
On the quantum side, we have $U_\zeta=U_\zeta(\mathfrak{sl_2})$ with $\zeta$ a primitive $9$th root of unity and $U_{\zeta^3}$ the corresponding quantum group at a $3$rd root of unity. 

Let's consider the regular orbit containing $0\in {^p C^-_\Z}$. The highest weights are $0,4,6,10,12$, $16,18,\cdots$. 
We express the radical filtration of a $G$-module via the following notation. 
\[ M= \begin{array}{c}
 M/\rad M=\hd M \\
 \rad M/\rad^2 M \\ 
 \rad^2 M/\rad^3 M\\
 \cdots \end{array}\]

It is easy to check that 
\[ \Delta(6)=\delrt(6)= \begin{array}{c}
 L(6) \\
 L(4) 
 \end{array}\]
and
 \[\delrt(10)= \begin{array}{c}
 L(10) ,
 \end{array}\]
while the structure of $\Delta(10)$ is either

\begin{equation}\label{case1}
\Delta(10)= \begin{array}{c}
 L(10) \\
 L(4) \\ 
 L(6)
 \end{array}\tag{Case 1}
\end{equation}

or 

\begin{equation}
\Delta(10)= \begin{array}{c}
 L(10) \\
 L(4) \oplus L(6)
 \end{array}\tag{Case 2}.
\end{equation}
(We cannot have 
\[\Delta(10)=\begin{array}{c}
L(10)\\
L(6)\\
L(4)
\end{array}\]
because $\Delta(10)\surj \delr(10)=\begin{array}{c}
L(10) \\
L(4)
\end{array}.$)

In either case, $\Delta(10)$ does not have a filtration with sections of the form $\delrt(\gamma)$.
We actually know which is the case. 
Suppose Case 2 is true. Then, we have \[\Ext_G^1(L(10),L(6))\neq 0.\]
Since the weight $10$ is the only one out of the Jantzen region among the weights $0,4,6,10$, 
we have \[\Ext_G^1(L(a),L(b))\neq 0\] 
for every pair of weights $a,b\in\{0,4,6,10\}$ in two adjacent alcoves.
Applying \cite[Theorem 5.3]{cps1} to the category $(\gmd)[0,4,6,10]$, we have $\ch L(10)=\ch L_\zeta(10)$.
This contradicts $\ch L_\zeta(10)=\ch L(10)+\ch L(4)$.
Thus, \eqref{case1} is the case.


Now we check that $\delrt(10)$ does not satisfy \eqref{2con2}.
Consider, for each case, the following sequence of distinguished triangles in $D^b(\gmd)$.

\newtheorem*{c1}{Case 1}
\newtheorem*{c2}{Case 2}

\[\Delta(10)\to Y_0=\delrt(10)\to Y_1= \begin{array}{c}
 L(4) \\ 
 L(6)
 \end{array}[1]\to,\]

\[\Delta(6)[1]\to Y_1= \begin{array}{c}
 L(4) \\ 
 L(6)
 \end{array}[1]\to Y_2=L(4)[1]\oplus L(4)[2]\to,\]
 
\[\Delta(4)[1]\oplus\Delta(4)[2]\to Y_2=L(4)[1]\oplus L(4)[2]\to Y_4= L(0)[2]\oplus L(0)[3]\to,\] 

$$\Delta(0)[2]\oplus\Delta(0)[3]\xrightarrow{\cong}Y_4= L(0)[2]\oplus L(0)[3]\to 0\to,$$

 using 
\[\Delta(0)=L(0),\ \ \ \ \ \ \ \ \Delta(4)= \begin{array}{c}
 L(4) \\ 
 L(0)
 \end{array}.\] 
Since we have
\begin{equation}\label{ortho}
\ext^n(\Delta(\lambda),\nabla(\lambda'))=
\begin{cases} k\ \ \ \ \lambda=\lambda', n=0\\
0\ \ \ \  \text{otherwise}\end{cases},
\end{equation}
 the sequence above of distinguished triangles show that $\Ext_G^i(\delrt(10),\nabla(4))$ has dimension one at $i=1,2$ and that $\Ext_G^i(\delrt(10),\nabla(0))$ has dimension one at $i=2,3$. (See \cite{cps1} or \cite[\S3.2]{mine} for details.) In particular, the $p^2$-analogue of \cite[Conjecture II]{PS14} is not true.


\bibliographystyle{abbrv}\def\cprime{$'$} \def\cprime{$'$} \def\cprime{$'$} \def\cprime{$'$}

\end{document}